\newtheorem{definition}{Definition}[section]
\newtheorem{theorem}[definition]{Theorem}
\newtheorem{lemma}[definition]{Lemma}
\theoremstyle{remark}
\title{\textbf{Generalized Discrete Orlicz-Morrey Spaces}\\[0.5em]}
\author{
  Hukmashabiyya Ariq Gumilar\thanks{Email: hukmashabiyya@upi.edu}, Al Azhary Masta, Siti Fatimah\\
  \small{Mathematics Study Program, Universitas Pendidikan Indonesia, Indonesia}
}
\date{}
\begin{document}

\maketitle

\begin{abstract}
The Orlicz-Morrey spaces, which were introduced through the research of Nakai in 2006, are a generalization and combination of Orlicz and Morrey spaces. There are two types of Orlicz-Morrey spaces, such as continuous Orlicz-Morrey spaces and discrete Orlicz-Morrey spaces. Some properties that apply to Orlicz-Morrey spaces have been studied correspondingly to discrete Orlicz-Morrey spaces. The objectives of the study are to construct generalized discrete Orlicz-Morrey spaces by substituting a Young function with \emph{s}-Young function. Furthermore, The purpose of this study is to see the validity of the properties of the discrete Orlicz-Morrey spaces to the generality of the discrete Orlicz-Morrey spaces. The method in this research draws on the definitions and properties of the discrete Orlicz-Morrey spaces of the previous study and applies the \emph{s}-Young function to the new Orlicz-Morrey spaces. As a result, this study concludes that generalized discrete Orlicz-Morrey spaces reduce to discrete Orlicz-Morrey spaces when \emph{s} is equal to 1. Furthermore, due to the characteristics of the \emph{s}-Young function, some properties of discrete Orlicz-Morrey spaces are preserved in generalized discrete Orlicz-Morrey spaces.
\end{abstract}

\noindent\textbf{Keywords:} Convex-\emph{s} function, Young-\emph{s} function, Orlicz-Morrey sequence spaces.\\
\noindent \textbf{MSC:} 42B35, 46B45, 46A45.

\section{Introduction}
In 2006, Nakai first introduced the Orlicz-Morrey spaces as a generalization and combination of Orlicz and Morrey spaces. There are two types of Orlicz-Morrey spaces that have been discussed by many researchers, namely continuous Orlicz-Morrey spaces and discrete Orlicz-Morrey spaces. Research on continuous Orlicz-Morrey spaces has been carried out by Nakai (2006), Sawano et al. (2012), Deringoz et al. (2014), and Masta et al. (2017) \cite{ref1, ref2, ref3, ref4, ref5, ref6}. Moreover, the Orlicz-Morrey sequence spaces have been studied by Fatimah et al. (2021) \cite{ref7}.\\

According to the study in \cite{ref7}, the discrete Orlicz-Morrey spaces are constructed using Young functions. Further investigation of Young functions has been conducted in \cite{ref6}, where Young function is defined as a function $\Phi : \left[0, \infty\right) \rightarrow \left[0, \infty\right)$ that satisfies the following conditions, such as $\Phi$ is a convex function. That satisfies $\Phi\left(tx+\left(1-t\right)y\right) \leq t\Phi\left(x\right)+\left(1-t\right)\Phi\left(y\right)$ for $t\in\left[0,1\right]$ \cite{ref9}, $\Phi$ is continuous function, $\Phi\left(0\right) = 0$, and $\lim_{t\to \infty} \Phi \left(t\right)=\infty $.\\

Following \cite{ref7}, suppose $S_{m,N}=\left\{m-N,\cdots,m,\cdots,m+N\right\}$ for some $m\in\mathbb{Z}$ and $N\in\mathbb{N}_0=\mathbb{N}\cup 0$ such that $\left|S_{m,N}\right|=2N+1$ for its cardinality of $S_{m,N}$. Afterwards, let $\phi\in G_\phi$ where $G_\phi$ denotes the set of all functions $\phi : 2\mathbb{N}_0 +1 \rightarrow (0, \infty)$ which $\phi$ fulfill the conditions of being nondecreasing and such that $\frac{\phi\left(2N+1\right)}{2N+1}$ is nonincreasing. Discrete Orlicz-Morrey spaces $\ell_{\phi,\Phi}$ is real sequence spaces $x=\left(x_k\right)_{k\in\mathbb{Z}}$ with norm definded by 
\begin{eqnarray*}
    \left\lVert x\right\rVert _{\ell_{\phi,\Phi}} =\sup_{m\in\mathbb{Z}, N\in\mathbb{N}_0} \left\lVert x\right\rVert _{\phi,\Phi, m, N}
\end{eqnarray*}
where
\begin{eqnarray*}
    \left\lVert x\right\rVert _{\phi,\Phi, m, N} = \inf \left\{b>0 \;\middle|\; \frac{\phi\left(2N+1\right)}{\left|S_{m,N}\right|}\sum_{k\in S_{m,N}}\Phi\left(\frac{\left|x_k\right|}{b}\right) \leq 1\right\}.
\end{eqnarray*}

The article focuses on extending the function used in discrete Orlicz-Morrey spaces by incorporating a continuous \emph{s}-convex function, which takes the value 0 only at the origin. This function is referred to as the \emph{s}-Young function, which is a generalization of the Young function \cite{ref10}. To clarify, the \emph{s}-Young function is defined as a function $\Phi_s : \left[0, \infty\right) \rightarrow \left[0, \infty\right)$ that satisfies $\Phi_s \left(ax+by\right) \leq a^{s}\Phi_s\left(x\right)+b^{s}\Phi_s\left(y\right)$ for $a,b\in\left[0, \infty\right)$, $s\in\left(0,1\right]$, and $a^{s}+b^{s}=1$ . Namely, $\Phi_s$ is the \emph{s}-convex function \cite{ref8}, $\Phi_s$ is continuous function, $\Phi_s \left(0\right) = 0$, and $\lim_{t\to \infty} \Phi_s \left(t\right)=\infty$. The \emph{s}-Young function will be utilized in the construction of the generalized discrete Orlicz-Morrey spaces and can also be called discrete Orlicz-Morrey-\emph{s}. Several properties inherent to discrete Orlicz-Morrey spaces are shown to remain applicable, albeit under modified conditions.\\

The next topic of discussion in this article is the definition and lemmas that are used to obtain the research results. Following this, the results of the study on generalized discrete Orlicz-Morrey spaces are presented, along with the properties suitable to these sequence spaces.

\section{Preliminaries}
This section provides the fundamental concepts and properties such as definitions, lemmas, and properties of the Young function and \emph{s}-Young function, along with the foundational work on discrete Orlicz-Morrey spaces from previous research that underpins the study of generalized discrete Orlicz-Morrey spaces, serving as a basis for the subsequent development of key results.

\begin{definition}
    \cite{ref9} Let $I \subseteq \mathbb{R}$ be an interval. A function $\Phi : I \rightarrow \mathbb{R}$ is said to be convex on $I$ if for any $t\in \left[0,1\right]$ and any points $x,y\in I$ satisfiy
    \begin{eqnarray*}
        \Phi\left(tx+\left(1-t\right)y\right) \leq t\Phi\left(x\right)+\left(1-t\right)\Phi\left(y\right).
    \end{eqnarray*}
\end{definition}

\begin{definition}
    \cite{ref8} Let $s\in\left(0,1\right]$. A function $\Phi_s : \left[0, \infty\right) \rightarrow \mathbb{R}$ is said to be \emph{s}-convex on if for any points $x,y\in \left[0, \infty\right)$ and $a,b\in\left[0, \infty\right)$ satisfy $a^{s}+b^{s}=1$, the inequality
    \begin{eqnarray*}
        \Phi_s \left(ax+by\right) \leq a^{s}\Phi_s\left(x\right)+b^{s}\Phi_s\left(y\right)
    \end{eqnarray*}
    are hold.
\end{definition}

\begin{lemma}
    \cite{ref10} If function $\Phi_s : \left[0, \infty\right) \rightarrow \left[0, \infty\right)$ is a convex function and $\Phi\left(0\right)=0$, $\Phi$ is \emph{s}-convex function.
\end{lemma}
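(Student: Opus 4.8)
The plan is to unpack the two definitions and reduce the \emph{s}-convexity inequality to the ordinary convexity inequality by exploiting the constraint $a^{s}+b^{s}=1$ together with $\Phi(0)=0$. Fix $s\in(0,1]$ and coefficients $a,b\in[0,\infty)$ with $a^{s}+b^{s}=1$, and fix points $x,y\in[0,\infty)$. The goal is to establish
\[
\Phi(ax+by)\le a^{s}\Phi(x)+b^{s}\Phi(y).
\]

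First I would record the elementary consequence of the normalization constraint: since $a^{s}+b^{s}=1$ and $s>0$, each of $a^{s},b^{s}$ lies in $[0,1]$, hence $a,b\in[0,1]$. For a base in $[0,1]$ and exponent $s\in(0,1]$ one has $a\le a^{s}$ and $b\le b^{s}$; summing these and using the constraint gives $a+b\le a^{s}+b^{s}=1$. This inequality $a+b\le 1$ is the crucial structural fact: it leaves a nonnegative weight $1-(a+b)$ that can be assigned to the point $0$.

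Next I would invoke convexity with the origin as a third node. Writing $ax+by=a\,x+b\,y+\bigl(1-a-b\bigr)\cdot 0$ expresses the argument as a genuine convex combination of $x$, $y$, and $0$, since all three weights are nonnegative and sum to $1$ by the previous step. Applying the convexity of $\Phi$ — iterated from the two-point definition, or directly as Jensen's inequality for three points — together with $\Phi(0)=0$ yields
\[
\Phi(ax+by)\le a\,\Phi(x)+b\,\Phi(y)+\bigl(1-a-b\bigr)\Phi(0)=a\,\Phi(x)+b\,\Phi(y).
\]

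Finally I would upgrade the coefficients from $a,b$ to $a^{s},b^{s}$. Because $\Phi$ takes values in $[0,\infty)$, both $\Phi(x)$ and $\Phi(y)$ are nonnegative, so the bounds $a\le a^{s}$ and $b\le b^{s}$ give $a\,\Phi(x)+b\,\Phi(y)\le a^{s}\Phi(x)+b^{s}\Phi(y)$. Chaining this with the previous display proves the claimed inequality. The argument is essentially routine, so rather than a genuine difficulty the ``hard part'' is the key observation that the normalization $a^{s}+b^{s}=1$ forces $a+b\le 1$, which converts the deficit into slack at the origin via $\Phi(0)=0$. The only mild bookkeeping arises in the degenerate cases $a+b=1$ (the weight on $0$ vanishes and one uses the two-point definition directly) and $a=0$ or $b=0$; each reduces immediately to ordinary convexity and is dispatched separately.
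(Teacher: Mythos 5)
Your proof is correct. Note that the paper itself gives no proof of this lemma --- it is quoted from reference [10] as a known preliminary fact --- so there is nothing internal to compare against; your argument is the standard one for this result. The key chain is exactly right: $a^{s}+b^{s}=1$ with $s\in(0,1]$ forces $a,b\in[0,1]$ and hence $a+b\le a^{s}+b^{s}=1$, the deficit $1-(a+b)$ is absorbed at the origin using $\Phi(0)=0$ via three-point convexity, giving $\Phi(ax+by)\le a\,\Phi(x)+b\,\Phi(y)$, and nonnegativity of $\Phi$ then upgrades the coefficients to $a^{s},b^{s}$. (Equivalently, the middle step is the scaling inequality $\Phi(\lambda t)\le\lambda\Phi(t)$ for $\lambda\in[0,1]$, which the paper records separately as Lemma 2.6(a); your origin-as-third-node decomposition is just that inequality combined with two-point convexity.) Your handling of the degenerate cases $a+b=1$ and $a=0$ or $b=0$ is fine, and the case $a=b=0$ never arises since it would violate $a^{s}+b^{s}=1$.
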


\begin{definition}
    \cite{ref6} A function $\Phi : \left[0, \infty\right) \rightarrow \left[0, \infty\right)$ is said to be a Young function if it satisfies the following conditions:
    \begin{enumerate}
        \item[a)] $\Phi$ is convex function,
        \item[b)] $\Phi$ is continuous function,
        \item[c)] $\Phi\left(0\right) = 0$, and
        \item[d)] $\lim_{t\to \infty} \Phi \left(t\right)=\infty $.
    \end{enumerate}
\end{definition}

\begin{definition}
    \cite{ref10} A function $\Phi_s : \left[0, \infty\right) \rightarrow \left[0, \infty\right)$ is a \emph{s}-Young function if
    \begin{enumerate}
        \item[a)] $\Phi_s$ is \emph{s}-convex function,
        \item[b)] $\Phi_s$ is continuous function,
        \item[c)] $\Phi_s \left(0\right) = 0$, and
        \item[d)] $\lim_{t\to \infty} \Phi_s \left(t\right)=\infty$.
    \end{enumerate}
\end{definition}

\begin{lemma}
    \cite{ref10} Let $\Phi_s : \left[0, \infty\right) \rightarrow \mathbb{R}$ is a \emph{s}-Young function, then
    \begin{enumerate}
        \item[a)] $\Phi_s\left(at\right)\leq a^s \Phi_s\left(t\right)$ for every $t\in\left[0, \infty\right)$ and $0\leq a\leq 1$ with $0<s\leq 1$,
        \item[b)] $\omega\left(t\right)=\frac{\Phi_s\left(t\right)}{t^s}$ is increasing function for every $t\in\left[0, \infty\right)$ with $0<s\leq 1$, and
        \item[c)] $\Phi_s \left(t\right)$ is increasing function for every $t\in\left[0, \infty\right)$.
    \end{enumerate}
\end{lemma}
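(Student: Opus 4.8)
The plan is to obtain all three parts directly from the defining $s$-convexity inequality, letting the normalization $\Phi_s(0)=0$ and the nonnegativity of $\Phi_s$ (both built into the $s$-Young hypothesis) do the essential work.

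For part (a), I would realize the argument $at$ as a degenerate $s$-convex combination involving the origin. Setting $x=t$ and $y=0$ in the $s$-convexity inequality and choosing coefficients $a$ and $b=(1-a^s)^{1/s}$, the second coefficient is well-defined and nonnegative precisely because $0\le a\le 1$ forces $0\le a^s\le 1$, and by construction $a^s+b^s=1$, as the definition demands. Then
\begin{eqnarray*}
\Phi_s(at)=\Phi_s(at+b\cdot 0)\le a^s\Phi_s(t)+b^s\Phi_s(0)=a^s\Phi_s(t),
\end{eqnarray*}
the final equality using $\Phi_s(0)=0$.

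For part (b), I would fix $0<t_1\le t_2$ and apply (a) with $a=t_1/t_2\in(0,1]$ and $t=t_2$, so that $at=t_1$; this yields $\Phi_s(t_1)\le (t_1/t_2)^s\,\Phi_s(t_2)$, and dividing through by $t_1^s>0$ rearranges exactly to $\omega(t_1)\le\omega(t_2)$, the asserted monotonicity of $\omega(t)=\Phi_s(t)/t^s$. For part (c), taking $0\le t_1<t_2$, the case $t_1=0$ is immediate from $\Phi_s(0)=0\le\Phi_s(t_2)$, while for $t_1>0$ the same inequality $\Phi_s(t_1)\le (t_1/t_2)^s\Phi_s(t_2)$ together with $(t_1/t_2)^s\le 1$ and $\Phi_s(t_2)\ge 0$ gives $\Phi_s(t_1)\le\Phi_s(t_2)$.

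The reasoning is short once the right substitution is identified, so I do not expect a serious obstacle. The one step I would treat most carefully is the choice of the auxiliary coefficient $b=(1-a^s)^{1/s}$ in (a): the $s$-convexity hypothesis is only available when the $s$-th powers of the two coefficients sum to one, so this specific $b$ is precisely what licenses the application, and parts (b) and (c) are then purely algebraic consequences backed by the nonnegativity of $\Phi_s$.
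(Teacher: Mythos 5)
Your proof is correct and complete: the decomposition $\Phi_s(at)=\Phi_s(a\cdot t+b\cdot 0)$ with $b=(1-a^s)^{1/s}$ is exactly what the $s$-convexity hypothesis licenses, and parts (b) and (c) then follow from (a) together with $\Phi_s(0)=0$ and the nonnegativity guaranteed by Definition 2.5. Note that the paper itself offers no proof of this lemma — it is imported from reference \cite{ref10} — so there is no internal argument to compare against; yours is the standard one and fills that gap correctly.
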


\begin{definition}
    \cite{ref10} Let $\Phi$ be a \emph{s}-Young function. For every $x \in \left[0, \infty\right)$, the inverse of $\Phi_s$ is defined as:
    \begin{eqnarray*}
        \Phi_s^{-1}(x) = \inf \left\{ r \geq 0 \;\middle|\; \Phi_s(r) > x \right\}.
    \end{eqnarray*}
\end{definition}

\begin{lemma}
    \cite{ref10} If $\Phi$ is a Young’s function, the following properties hold:
    \begin{enumerate}
        \item[a)] $\Phi_s^{-1}(0) = 0$,
        \item[b)] $\Phi_s^{-1}$ is an increasing function, and
        \item[c)] $\Phi_s\left(\Phi_s^{-1}(t)\right) \leq t \leq \Phi_s^{-1}\left(\Phi_s(t)\right)$ for all $t \geq 0$.
    \end{enumerate}
\end{lemma}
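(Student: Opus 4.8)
The plan is to derive all three properties directly from the generalized‑inverse definition $\Phi_s^{-1}(x)=\inf\{r\ge 0 : \Phi_s(r)>x\}$, leaning on four facts about an $s$‑Young function: $\Phi_s(0)=0$, continuity, monotonicity (item (c) of the preceding lemma), and $\lim_{t\to\infty}\Phi_s(t)=\infty$. The first preliminary step is to observe that the defining set $E_x:=\{r\ge 0:\Phi_s(r)>x\}$ is nonempty for every $x\ge 0$: since $\Phi_s(t)\to\infty$, some $r$ satisfies $\Phi_s(r)>x$. As $E_x\subseteq[0,\infty)$ is bounded below, its infimum exists and is finite, so that $\Phi_s^{-1}$ is well defined on $[0,\infty)$ and the three claims are all statements about infima of these sets.

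For part (a) I would combine $\Phi_s(0)=0$ with the monotonicity of $\Phi_s$: because $\Phi_s$ increases from $0$, every $r>0$ satisfies $\Phi_s(r)>0$, so $E_0=(0,\infty)$ and therefore $\Phi_s^{-1}(0)=\inf E_0=0$. For part (b), the key observation is that $x_1\le x_2$ forces the inclusion $E_{x_2}\subseteq E_{x_1}$, since $\Phi_s(r)>x_2$ implies $\Phi_s(r)>x_1$. Passing to infima reverses the inclusion, yielding $\Phi_s^{-1}(x_1)=\inf E_{x_1}\le \inf E_{x_2}=\Phi_s^{-1}(x_2)$, which is exactly the monotonicity of $\Phi_s^{-1}$.

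Part (c) splits into two inequalities handled separately. For the right-hand bound $t\le\Phi_s^{-1}(\Phi_s(t))$, I would show that every $r\in E_{\Phi_s(t)}$ satisfies $r>t$: were $r\le t$, monotonicity would give $\Phi_s(r)\le\Phi_s(t)$, contradicting $\Phi_s(r)>\Phi_s(t)$. Hence $t$ is a lower bound of $E_{\Phi_s(t)}$, so $t\le\inf E_{\Phi_s(t)}=\Phi_s^{-1}(\Phi_s(t))$. For the left-hand bound $\Phi_s(\Phi_s^{-1}(t))\le t$, set $r_0:=\Phi_s^{-1}(t)=\inf E_t$. Every $r<r_0$ lies outside $E_t$, so $\Phi_s(r)\le t$; letting $r\uparrow r_0$ and invoking continuity gives $\Phi_s(r_0)=\lim_{r\uparrow r_0}\Phi_s(r)\le t$, with the degenerate case $r_0=0$ immediate from $\Phi_s(0)=0\le t$.

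The main obstacle I anticipate is precisely this last inequality, as it is the only place where continuity is genuinely indispensable. The delicate point is justifying the left-hand limit $\Phi_s(r_0^-)=\Phi_s(r_0)$ and confirming that all $r<r_0$ truly satisfy $\Phi_s(r)\le t$; one must also carry the case $r_0=0$ separately and use $\lim_{t\to\infty}\Phi_s(t)=\infty$ to guarantee $E_t$ is never empty so that $r_0$ is a genuine real number. By contrast, parts (a) and (b) and the right-hand half of (c) reduce to straightforward set-inclusion and monotonicity arguments.
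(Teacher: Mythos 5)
The paper never proves this statement: Lemma 2.8 is quoted from reference \cite{ref10} as a preliminary fact, so there is no in-paper argument to compare your proof against, and it must be judged on its own merits. Your well-definedness observation, your proof of part (b), and both halves of part (c) are correct and follow the standard generalized-inverse route: the inclusion $E_{x_2} \subseteq E_{x_1}$ for $x_1 \le x_2$ gives monotonicity of the infima; the lower-bound argument gives $t \le \Phi_s^{-1}\left(\Phi_s(t)\right)$ using only monotonicity of $\Phi_s$; and the left-limit-plus-continuity argument gives $\Phi_s\left(\Phi_s^{-1}(t)\right) \le t$, with the degenerate case $r_0 = 0$ correctly disposed of by $\Phi_s(0) = 0 \le t$. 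One terminological caveat: your set-inclusion argument in (b) proves that $\Phi_s^{-1}$ is non-decreasing; if ``increasing'' is read strictly, an extra argument (via strict monotonicity of $\Phi_s$) would be required.

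The genuine gap is in part (a). You assert that ``because $\Phi_s$ increases from $0$, every $r>0$ satisfies $\Phi_s(r)>0$,'' but this does not follow from monotonicity together with $\Phi_s(0)=0$: a non-decreasing continuous function vanishing at the origin may vanish on a whole interval. Concretely, take $\Phi_s(t) = \max(0, t-1)$. It is convex, continuous, satisfies $\Phi_s(0)=0$ and $\lim_{t\to\infty}\Phi_s(t)=\infty$, hence by Lemma 2.3 it is $s$-convex and satisfies every clause of Definition 2.5; yet $E_0 = \left\{r \ge 0 : \Phi_s(r) > 0\right\} = (1,\infty)$, so $\Phi_s^{-1}(0) = 1 \neq 0$. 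Thus part (a) is actually false under the paper's formal definition of an $s$-Young function, and your argument silently invokes the stronger hypothesis --- stated only in the paper's introduction, not in Definition 2.5 --- that $\Phi_s$ ``takes the value $0$ only at the origin.'' To close the gap, state that positivity assumption (or strict monotonicity of $\Phi_s$, which implies it via $\Phi_s(r) > \Phi_s(0) = 0$) explicitly; with it, $E_0 = (0,\infty)$ and $\Phi_s^{-1}(0) = \inf E_0 = 0$ follows exactly as you wrote.
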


\begin{definition}
    \cite{ref7} Let $\Phi : \left[0, \infty\right) \rightarrow \left[0, \infty\right)$ is a Young function, suppose $S_{m,N}=\left\{m-N,\cdots,m,\cdots,m+N\right\}$ for some $m\in\mathbb{Z}$ and $N\in\mathbb{N}_0=\mathbb{N}\cup 0$ such that $\left|S_{m,N}\right|=2N+1$ for its cardinality of $S_{m,N}$. Let $\phi\in G_\phi$ where $G_\phi$ denotes the set of all functions $\phi : 2\mathbb{N}_0 +1 \rightarrow (0, \infty)$ which $\phi$ fulfill the conditions of being nondecreasing and such that $\frac{\phi\left(2N+1\right)}{2N+1}$ is nonincreasing. Discrete Orlicz-Morrey spaces $\ell_{\phi,\Phi}$ is real sequence spaces $x=\left(x_k\right)_{k\in\mathbb{Z}}$ with norm definded by 
    \begin{eqnarray*}
        \left\lVert x\right\rVert _{\ell_{\phi,\Phi}} =\sup_{m\in\mathbb{Z}, N\in\mathbb{N}_0} \left\lVert x\right\rVert _{\phi,\Phi, m, N}
    \end{eqnarray*}
    where
    \begin{eqnarray*}
        \left\lVert x\right\rVert _{\phi,\Phi, m, N} = \inf \left\{b>0 \;\middle|\; \frac{\phi\left(2N+1\right)}{\left|S_{m,N}\right|}\sum_{k\in S_{m,N}}\Phi\left(\frac{\left|x_k\right|}{b}\right) \leq 1\right\}.
    \end{eqnarray*}
\end{definition}

\begin{lemma}
    \cite{ref7} Let $x=\left(x_k\right)$ with $x\in\ell_{\phi,\Phi}$. If  $\left\lVert x\right\rVert _{\phi,\Phi,m,N}\ne 0$, then 
    \begin{eqnarray*}
        \frac{\phi\left(2N+1\right)}{\left|S_{m,N}\right|}\sum_{k\in S_{m,N}}\Phi\left(\frac{\left|x_k\right|}{\left\lVert x\right\rVert _{\phi,\Phi,m,N}}\right)\leq1.
    \end{eqnarray*}
\end{lemma}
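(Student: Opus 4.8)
The plan is to exploit the fact that the index set $S_{m,N}$ is \emph{finite}, with $\left|S_{m,N}\right|=2N+1$, so that the quantity
\[
F\left(b\right)=\frac{\phi\left(2N+1\right)}{\left|S_{m,N}\right|}\sum_{k\in S_{m,N}}\Phi\left(\frac{\left|x_k\right|}{b}\right)
\]
is a finite sum and the interchange of limit and summation is automatic. Write $b_0=\left\lVert x\right\rVert_{\phi,\Phi,m,N}$, which is strictly positive by hypothesis. First I would invoke the definition of the infimum: since $b_0$ is the infimum of the set of admissible $b>0$ satisfying $F\left(b\right)\leq 1$, there exists a decreasing sequence $\left(b_j\right)_{j}$ with $b_j\searrow b_0$ and $F\left(b_j\right)\leq 1$ for every $j$.

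Next I would pass to the limit $j\to\infty$ term by term. Since $b_0>0$, for each fixed $k\in S_{m,N}$ the argument $\frac{\left|x_k\right|}{b_j}$ converges to $\frac{\left|x_k\right|}{b_0}$, and by the continuity of the Young function $\Phi$ (condition (b) of its definition) we have $\Phi\left(\frac{\left|x_k\right|}{b_j}\right)\to\Phi\left(\frac{\left|x_k\right|}{b_0}\right)$. Because the sum runs over only $2N+1$ indices, the limit commutes with the finite sum, giving $F\left(b_j\right)\to F\left(b_0\right)$. As each $F\left(b_j\right)\leq 1$, the limit satisfies $F\left(b_0\right)\leq 1$, which is precisely the claimed inequality.

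The only point requiring care is the hypothesis $b_0\neq 0$: it guarantees that every quotient $\frac{\left|x_k\right|}{b_0}$ is well defined and finite, so that $\Phi$ is evaluated at a finite argument and the continuity argument does not break down. I would also note, although it is not strictly needed, that since $\Phi$ is convex with $\Phi\left(0\right)=0$ it is nondecreasing, so the terms $\Phi\left(\frac{\left|x_k\right|}{b_j}\right)$ increase monotonically to their limit as $b_j$ decreases to $b_0$; this makes the passage to the limit entirely transparent. The main, and essentially only, obstacle is thus confirming the limit interchange, which the finiteness of $S_{m,N}$ renders routine.
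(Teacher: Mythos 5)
Your proposal is correct and takes essentially the same approach as the paper's proof of the corresponding lemma (Lemma 3.2, stated there for $\Phi_s$): approximate the infimum $\left\lVert x\right\rVert_{\phi,\Phi,m,N}$ from above by admissible values $b$ from the defining set, observe each such $b$ gives a sum $\leq 1$, and pass to the limit using the continuity of $\Phi$ together with the finiteness of $S_{m,N}$. If anything, you spell out the limit-interchange step more explicitly than the paper, which compresses it into ``since this inequality holds for any $\varepsilon>0$.''
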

 
\begin{lemma}
    \cite{ref7} Let $x=\left(x_k\right)$ with $x\in\ell_{\phi,\Phi}$. $\left\lVert x\right\rVert _{\phi,\Phi,m,N}\leq 1$ if and only if 
    \begin{eqnarray*}
        \frac{\phi\left(2N+1\right)}{\left|S_{m,N}\right|}\sum_{k\in S_{m,N}}\Phi\left(\left|x_k\right|\right)\leq1.
    \end{eqnarray*}
\end{lemma}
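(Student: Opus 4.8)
The plan is to prove the equivalence by reading the infimum in the definition of $\|x\|_{\phi,\Phi,m,N}$ directly against the modular evaluated at $b=1$. Throughout I would abbreviate
$$\rho(b) = \frac{\phi(2N+1)}{|S_{m,N}|}\sum_{k\in S_{m,N}}\Phi\left(\frac{|x_k|}{b}\right),$$
so that $\|x\|_{\phi,\Phi,m,N}=\inf\{b>0 : \rho(b)\le 1\}$ while the quantity displayed in the statement is precisely $\rho(1)$. Two preliminary facts would be isolated first. First, $\Phi$ is nondecreasing on $[0,\infty)$: for $0\le u\le v$ one writes $u=\tfrac{u}{v}\,v+(1-\tfrac{u}{v})\,0$ and applies convexity together with $\Phi(0)=0$ and $\Phi\ge 0$ to get $\Phi(u)\le\tfrac{u}{v}\Phi(v)\le\Phi(v)$. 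Second, as a consequence, $b\mapsto\rho(b)$ is nonincreasing, since increasing $b$ decreases each argument $|x_k|/b$ and hence each term $\Phi(|x_k|/b)$.

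The reverse implication is then immediate from the definition of the infimum: if $\rho(1)\le 1$, then $b=1$ is a member of the set $\{b>0:\rho(b)\le 1\}$ over which the infimum is taken, so $\|x\|_{\phi,\Phi,m,N}\le 1$.

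For the forward implication I would assume $\|x\|_{\phi,\Phi,m,N}\le 1$ and split into two cases. If $\|x\|_{\phi,\Phi,m,N}=0$, then $\rho(b)\le 1$ for values of $b$ arbitrarily close to $0$; were some $x_{k_0}\ne 0$ with $k_0\in S_{m,N}$, letting $b\to 0^{+}$ would send $\Phi(|x_{k_0}|/b)\to\infty$ (by property (d) of the Young function) and hence $\rho(b)\to\infty$, a contradiction, so every $x_k$ with $k\in S_{m,N}$ vanishes and $\rho(1)=0\le 1$. If instead $0<\|x\|_{\phi,\Phi,m,N}\le 1$, then the preceding lemma applies and yields $\rho\big(\|x\|_{\phi,\Phi,m,N}\big)\le 1$; combining this with the monotonicity of $\rho$ and the hypothesis $\|x\|_{\phi,\Phi,m,N}\le 1$ gives $\rho(1)\le\rho\big(\|x\|_{\phi,\Phi,m,N}\big)\le 1$, as required.

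The only genuinely delicate point is this forward direction, where one must know that the modular evaluated at the infimum value itself does not exceed $1$ — that is, that the infimum is in effect attained rather than merely approached — which is exactly what the preceding lemma supplies. I also expect the degenerate case $\|x\|_{\phi,\Phi,m,N}=0$ to require separate handling, since that lemma explicitly excludes it; everything else reduces to the monotonicity of $\rho$ and the definition of the infimum.
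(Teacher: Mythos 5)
Your proof is correct and takes essentially the same route as the paper's proof of the analogous $s$-version (Lemma 3.3): the reverse direction observes that $b=1$ belongs to the set defining the infimum, and the forward direction combines the preceding lemma (the modular evaluated at $\left\lVert x\right\rVert _{\phi,\Phi,m,N}$ is at most $1$) with the monotonicity of $\Phi$. Your only genuine addition is the explicit treatment of the degenerate case $\left\lVert x\right\rVert _{\phi,\Phi,m,N}=0$ via property (d) of the Young function, a case the paper's argument silently assumes away by writing $0<\left\lVert x\right\rVert _{\phi,\Phi_s,m,N}\leq 1$, so you have in fact closed a small gap rather than diverged from the method.
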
 

\begin{lemma}
    \cite{ref7} Let $\Phi : \left[0, \infty\right) \rightarrow \left[0, \infty\right)$ be a Young function and $\phi\in G_\phi$. For all $b>0$, $\frac{\phi\left(2N+1\right)}{\left|S_{m,N}\right|}\sum_{k\in S_{m,N}}\Phi\left(\frac{\left|x_k\right|}{b}\right)\leq1$ if and only if $\left\lVert x\right\rVert _{\phi,\Phi,m,N}=0$ for any $x=\left(x_k\right)\in\ell_{\phi,\Phi}$.
\end{lemma}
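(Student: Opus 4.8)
The plan is to read the statement as an equivalence between the condition ``the displayed inequality holds for every $b>0$'' and the condition $\left\lVert x\right\rVert_{\phi,\Phi,m,N}=0$, and to prove the two implications separately. The single structural fact that drives both directions is that a Young function is nondecreasing: since $\Phi$ is convex with $\Phi(0)=0$ and takes values in $[0,\infty)$, for $0\le u\le v$ one writes $u=\tfrac{u}{v}\,v+\left(1-\tfrac{u}{v}\right)0$ and applies convexity to obtain $\Phi(u)\le\tfrac{u}{v}\Phi(v)\le\Phi(v)$. I would record this monotonicity first, as it is the only nontrivial input.

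For the forward direction, suppose the inequality
\begin{eqnarray*}
\frac{\phi\left(2N+1\right)}{\left|S_{m,N}\right|}\sum_{k\in S_{m,N}}\Phi\left(\frac{\left|x_k\right|}{b}\right)\leq1
\end{eqnarray*}
holds for all $b>0$. Then the set $A=\left\{b>0 \;\middle|\; \text{the inequality holds}\right\}$, whose infimum defines $\left\lVert x\right\rVert_{\phi,\Phi,m,N}$, equals all of $(0,\infty)$, and therefore $\left\lVert x\right\rVert_{\phi,\Phi,m,N}=\inf A=\inf(0,\infty)=0$.

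For the converse, assume $\left\lVert x\right\rVert_{\phi,\Phi,m,N}=\inf A=0$. First note $A\ne\varnothing$ (otherwise the infimum would be $+\infty$), and since $0$ is the greatest lower bound, for any fixed $b>0$ there exists $b'\in A$ with $0<b'<b$. Because $b>b'$ gives $\tfrac{|x_k|}{b}\le\tfrac{|x_k|}{b'}$ for each $k$, monotonicity of $\Phi$ yields $\Phi\!\left(\tfrac{|x_k|}{b}\right)\le\Phi\!\left(\tfrac{|x_k|}{b'}\right)$; summing over $k\in S_{m,N}$ and multiplying by the positive constant $\phi(2N+1)/\left|S_{m,N}\right|$ propagates the inequality from $b'$ to $b$, so $b\in A$. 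As $b>0$ was arbitrary, the inequality holds for every $b>0$, which is the desired conclusion.

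The proof is short, so the genuine difficulties are interpretive rather than computational. The main point to settle at the outset is the scope of the quantifier ``for all $b>0$'': a literal pointwise reading (for each $b$, the inequality being equivalent to the vanishing of the norm) is false, since for a nonzero $x$ the inequality still holds once $b$ is large enough; the statement is correct only when the universal quantifier governs one entire side of the equivalence, as above. The only other step requiring care is the use of monotonicity, which is precisely what converts ``the infimum is $0$'' into ``membership in $A$ is upward closed,'' and hence into ``$A=(0,\infty)$.''
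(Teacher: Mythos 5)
Your proof is correct and takes essentially the same approach as the paper's own argument (given for the analogous $s$-Young version, Lemma 3.4, since the paper cites this lemma to prior work without reproving it): the forward direction by observing that every $b>0$ belongs to the defining set $A$ so that $\left\lVert x\right\rVert_{\phi,\Phi,m,N}=\inf A=0$, and the converse by combining the monotonicity of $\Phi$ with the infimum property. The only difference is organizational: where the paper proves the converse by contradiction with a two-case analysis, you run the same monotonicity inequality directly (choosing $b'\in A$ with $b'<b$ and propagating membership of $A$ upward), which is a cleaner rendering of the identical idea and also makes explicit the correct scoping of the quantifier ``for all $b>0$'' that the paper leaves implicit.
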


\begin{lemma}
    \cite{ref7} Let $\Phi : \left[0, \infty\right) \rightarrow \left[0, \infty\right)$ be a Young function, $\phi\in G_\phi$, and $x=\left(x_k\right)$ with $x\in\ell_{\phi,\Phi}$. For each $a>0$, $\frac{\phi\left(2N+1\right)}{\left|S_{m,N}\right|}\sum_{k\in S_{m,N}}\Phi\left(a\left|x_k\right|\right)=0$ if and only if $\left\lVert x\right\rVert _{\phi,\Phi,m,N}=0$.
\end{lemma}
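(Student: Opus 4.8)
The plan is to reduce the stated equivalence to a pointwise statement and then handle the two implications separately, leaning on the preceding characterization that $\|x\|_{\phi,\Phi,m,N}=0$ holds if and only if $\frac{\phi(2N+1)}{|S_{m,N}|}\sum_{k\in S_{m,N}}\Phi\!\left(\frac{|x_k|}{b}\right)\le 1$ for every $b>0$. The first observation is that the coefficient $\frac{\phi(2N+1)}{|S_{m,N}|}$ is strictly positive and each summand $\Phi(a|x_k|)$ is nonnegative, so the displayed expression equals $0$ precisely when $\Phi(a|x_k|)=0$ for every $k\in S_{m,N}$. This turns the whole lemma into a question about the vanishing of the individual terms.

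For the implication norm $=0\Rightarrow$ sum $=0$, I would fix $a>0$ and exploit convexity. Since $\Phi$ is convex with $\Phi(0)=0$, one has $n\,\Phi(t)\le\Phi(nt)$ for every $t\ge 0$ and $n\in\mathbb{N}$, obtained by writing $t=\frac{1}{n}(nt)+\left(1-\frac{1}{n}\right)\cdot 0$ and applying the convexity inequality. Assuming $\|x\|_{\phi,\Phi,m,N}=0$, the preceding lemma gives $\frac{\phi(2N+1)}{|S_{m,N}|}\sum_{k\in S_{m,N}}\Phi\!\left(\frac{|x_k|}{b}\right)\le 1$ for all $b>0$; specializing to $b=\frac{1}{na}$ and applying $n\,\Phi(a|x_k|)\le\Phi(na|x_k|)$ yields $\frac{\phi(2N+1)}{|S_{m,N}|}\sum_{k\in S_{m,N}}\Phi(a|x_k|)\le\frac{1}{n}$. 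Letting $n\to\infty$ forces the sum to be $0$.

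For the converse, sum $=0\Rightarrow$ norm $=0$, I would start from $\Phi(a|x_k|)=0$ for every $k\in S_{m,N}$ and argue that each $x_k$ must vanish on $S_{m,N}$: since $\Phi$ is a Young function that takes the value $0$ only at the origin, $\Phi(a|x_k|)=0$ with $a>0$ forces $|x_k|=0$. Once $x_k=0$ for all $k\in S_{m,N}$, every term satisfies $\Phi(|x_k|/b)=\Phi(0)=0$, so the defining inequality holds for all $b>0$ and the preceding lemma returns $\|x\|_{\phi,\Phi,m,N}=0$ (equivalently, the infimum over admissible $b$ is $0$ directly).

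I expect the delicate step to be this last deduction $\Phi(a|x_k|)=0\Rightarrow x_k=0$: it is immediate when $\Phi$ is strictly positive on $(0,\infty)$, but a Young function satisfying only convexity, continuity, $\Phi(0)=0$ and $\lim_{t\to\infty}\Phi(t)=\infty$ could in principle vanish on a whole interval $[0,t_0]$, in which case $\Phi(a|x_k|)=0$ need not give $x_k=0$. The main obstacle is therefore to secure (or assume) the nondegeneracy of $\Phi$, which is what makes the $\Rightarrow$ implication go through; the norm $=0\Rightarrow$ sum $=0$ direction, by contrast, is unconditional and rests only on the convexity inequality and the preceding lemma.
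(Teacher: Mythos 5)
Your first implication ($\lVert x\rVert_{\phi,\Phi,m,N}=0 \Rightarrow \text{sum}=0$) is correct and is, in substance, the paper's own argument: the paper (proving the $s$-analogue in Lemma 3.6) uses the scaling bound $\Phi_s(\varepsilon t)\le \varepsilon^s\Phi_s(t)$ exactly where you use the convexity consequence $n\Phi(t)\le\Phi(nt)$; for $s=1$ these are the same inequality with $\varepsilon=1/n$, and your version, which keeps the estimate at the level of the weighted sum rather than of individual terms, is if anything slightly cleaner than the paper's. The genuine problem is your converse. You reduce ``sum $=0$'' to the pointwise statement $\Phi(a|x_k|)=0$ and then try to conclude $x_k=0$, which, as you yourself note, requires $\Phi$ to be strictly positive on $(0,\infty)$. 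That property is not part of the paper's definition of a Young function (convex, continuous, $\Phi(0)=0$, $\Phi(t)\to\infty$), so as written this step is a real gap, not a removable presentational issue.

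The gap, however, is self-inflicted: the detour through $x_k=0$ is unnecessary. The hypothesis of the converse is that the weighted sum vanishes for \emph{every} $a>0$ (this is how the paper reads the statement, and how it must be read: for a single fixed $a$ the implication is false for degenerate $\Phi$, e.g.\ $\Phi\equiv 0$ on $[0,t_0]$ and $x_k\equiv t_0$). Given that, simply observe that $\Phi(a|x_k|)=\Phi\bigl(|x_k|/b\bigr)$ with $b=1/a$, so for every $b>0$ one has $\frac{\phi(2N+1)}{|S_{m,N}|}\sum_{k\in S_{m,N}}\Phi\bigl(|x_k|/b\bigr)=0\le 1$; hence every $b>0$ lies in the set whose infimum defines $\lVert x\rVert_{\phi,\Phi,m,N}$, and that infimum is $0$ (equivalently, invoke the preceding characterization you already cited). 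This is exactly the paper's proof of this direction: $0\le\lVert x\rVert_{\phi,\Phi,m,N}\le 1/a$ for all $a>0$, hence the norm vanishes. No nondegeneracy of $\Phi$ is needed anywhere, so the ``main obstacle'' you flag dissolves once the universal quantifier over $a$ is exploited instead of the pointwise vanishing of the $x_k$.
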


\begin{definition}
    \cite{ref11} The function $\left\lVert \cdot\right\rVert : X \rightarrow \left[0, \infty\right)$ is said to be norm on vector spaces $X$ if for every $x,y\in X$ and $a\in\mathbb{R}$, satisfies
    \begin{enumerate}
        \item[a)] $\left\lVert x\right\rVert\geq 0$,
        \item[b)] $\left\lVert x\right\rVert = 0$ if and only if $x=0$,
        \item[c)] $\left\lVert ax\right\rVert = \left|a\right|\left\lVert x\right\rVert$, and
        \item[d)] $\left\lVert x+y\right\rVert \leq \left\lVert x\right\rVert +\left\lVert y\right\rVert$.
    \end{enumerate}
\end{definition}

\begin{definition}
    \cite{ref12} The function $\left\lVert \cdot\right\rVert : X \rightarrow \left[0, \infty\right)$ is said to be quasi-norm on vector spaces $X$ if for every $x,y\in X$ and $a\in\mathbb{R}$, satisfies
    \begin{enumerate}
        \item[a)] $\left\lVert x\right\rVert\geq 0$,
        \item[b)] $\left\lVert x\right\rVert = 0$ if and only if $x=0$,
        \item[c)] $\left\lVert ax\right\rVert = \left|a\right|\left\lVert x\right\rVert$, and
        \item[d)] There exist $C\geq 1$ such that $\left\lVert x+y\right\rVert \leq C\left(\left\lVert x\right\rVert +\left\lVert y\right\rVert\right)$.
    \end{enumerate}
\end{definition}

\section{Main Results}
\subsection{Definition of Generalized Discrete Orlicz-Morrey Spaces}
Based on the definition of discrete Orlicz-Morrey spaces $\ell_{\phi,\Phi}$ in Definition 2.9, replacing the Young function with an \emph{s}-Young function yields a new formulation of discrete Orlicz-Morrey spaces. Since the \emph{s}-Young function generalizes the Young function, the resulting spaces represent a natural generalization of $\ell_{\phi,\Phi}$. These spaces are referred to as generalized discrete Orlicz-Morrey spaces or discrete Orlicz-Morrey-\emph{s} spaces, denoted by $\ell_{\phi,\Phi_s}$, where $\Phi_s$ is an \emph{s}-Young function.The following outlines the definition and properties of the sequence spaces $\ell_{\phi,\Phi_s}$:
\begin{definition}
    Let $s\in\left(0,1\right]$ and $\Phi_s : \left[0, \infty\right) \rightarrow \left[0, \infty\right)$ is a \emph{s}-Young function, suppose $S_{m,N}=\left\{m-N,\cdots,m,\cdots,m+N\right\}$ for some $m\in\mathbb{Z}$ and $N\in\mathbb{N}_0=\mathbb{N}\cup 0$ such that $\left|S_{m,N}\right|=2N+1$ for its cardinality of $S_{m,N}$. Let $\phi\in G_\phi$ where $G_\phi$ denotes set of all functions $\phi : 2\mathbb{N}_0 +1 \rightarrow (0, \infty)$ which $\phi$ fulfill the conditions of being nondecreasing and such that $\frac{\phi\left(2N+1\right)}{2N+1}$ is nonincreasing. Discrete Orlicz-Morrey spaces $\ell_{\phi,\Phi_s}$ is real sequence spaces $x=\left(x_k\right)_{k\in\mathbb{Z}}$ with a nonnegative function definded by 
    \begin{eqnarray*}
        \left\lVert x\right\rVert _{\ell_{\phi,\Phi_s}} =\sup_{m\in\mathbb{Z}, N\in\mathbb{N}_0} \left\lVert x\right\rVert _{\phi,\Phi_s, m, N}
    \end{eqnarray*}
    where
    \begin{eqnarray*}
        \left\lVert x\right\rVert _{\phi,\Phi_s, m, N} = \inf \left\{b>0 \;\middle|\; \frac{\phi\left(2N+1\right)}{\left|S_{m,N}\right|}\sum_{k\in S_{m,N}}\Phi_s\left(\frac{\left|x_k\right|}{b}\right) \leq 1\right\}.
    \end{eqnarray*}
\end{definition}

Before examining the properties of $\ell_{\phi,\Phi_s}$, an example of an element from the discrete Orlicz-Morrey-\emph{s} space is presented. 

\subsection{Example}
 Consider $\Phi_s\left(t\right) = t^p$ with $t \in \left[0, \infty\right)$ and $p\in\left[1,\infty\right)\cup\left\{\frac{1}{n} \;\middle|\; n\in\mathbb{N}\right\}$, where $\phi\left(2N+1\right) = 2N+1$. For $b > 0$ and $\left(x_k\right) \subseteq \mathbb{R}$, the following can be obtained
\begin{eqnarray*}
    \frac{\phi\left(2N+1\right)}{\left|S_{m,N}\right|}\sum_{k\in S_{m,N}}\Phi_s\left(\frac{\left|x_k\right|}{b}\right)&=&\frac{2N+1}{2N+1}\sum_{k\in S_{m,N}}{\left(\frac{\left|x_k\right|}{b}\right)}^p\\
    &=&\frac{1}{b^p}\sum_{k\in S_{m,N}}{\left|x_k\right|}^p.
\end{eqnarray*}
Let $\left(x_k\right)_{k\in\mathbb{Z}}\subseteq\mathbb{R}$ be the sequence that $x_k=\frac{1}{D^{\frac{\left|k\right|+1}{p}}}$ where $D>1$. Consider $m=0$ and $N\in\mathbb{N}_0$ arbitrary, observe that\\
\begin{eqnarray*}
    \left\lVert x\right\rVert _{\ell_{\phi,\Phi_s}} &=& \sup_{m=0, N \in \mathbb{N}_0} \left(\inf \left\{ b > 0 \;\middle|\; \frac{1}{b^p} \sum_{k \in S_{0,N}} {\left|{\left(\frac{1}{D^{\left|k\right|+1}} \right)}^{\frac{1}{p}}\right|}^{p} \leq 1 \right\} \right)\\
    &=& \sup_{m=0, N \in \mathbb{N}_0} \left(\inf \left\{ b > 0 \;\middle|\; \sum_{k \in S_{0,N}} \left(\frac{1}{D^{\left|k\right|+1}}\right) \leq b^p \right\}\right)\\
    &=& \sup_{m=0, N \in \mathbb{N}_0} \left(\inf \left\{ b > 0 \;\middle|\; \sum_{k=-N}^{-1} \frac{1}{D^{\left|k\right|+1}}+\frac{1}{D^{\left|0\right|+1}}+\sum_{k=1}^{N} \frac{1}{D^{\left|k\right|+1}} \leq b^p \right\}\right)\\
    &=& \sup_{m=0, N \in \mathbb{N}_0} \left(\inf \left\{ b > 0 \;\middle|\;\frac{1}{D^{\left|0\right|+1}}+2\sum_{k=1}^{N} \frac{1}{D^{\left|k\right|+1}} \leq b^p \right\}\right)\\
    &=& \sup_{m=0, N \in \mathbb{N}_0} \left(\inf \left\{ b > 0 \;\middle|\; \frac{1}{D}+2 \cdot \frac{\frac{1}{D^2}\left(1-{\frac{1}{D}}^N\right)}{1-\frac{1}{D}} \leq b^p \right\}\right)\\
    &=& \sup_{m=0, N \in \mathbb{N}_0} \left(\inf \left\{ b > 0 \;\middle|\; \frac{D+1}{D\left(D-1\right)}-\frac{2\cdot{\left(\frac{1}{D}\right)}^N}{D\left(D-1\right)} \leq b^p \right\}\right)\\
    &=& \sup_{m=0, N \in \mathbb{N}_0} \left(\inf \left\{ b > 0 \;\middle|\; \left(\frac{D+1}{D\left(D-1\right)}-\frac{2\cdot{\left(\frac{1}{D}\right)}^N}{D\left(D-1\right)}\right)^{\frac{1}{p}} \leq b \right\}\right)\\
    &=& \sup_{m=0, N \in \mathbb{N}_0} \left(\left(\frac{D+1}{D\left(D-1\right)}-\frac{2\cdot{\left(\frac{1}{D}\right)}^N}{D\left(D-1\right)}\right)^{\frac{1}{p}} \right)\\
    &=& \left(\frac{D+1}{D\left(D-1\right)}\right)^{\frac{1}{p}}.
\end{eqnarray*}

By evaluating the norm of the sequence $x = (x_k)_{k\in\mathbb{Z}} = \left( \frac{1}{D^{\frac{\left|k\right|+1}{p}}} \right)$ where $D>1$, it can be shown that $\left\lVert x\right\rVert _{\ell_{\phi,\Phi_s}} = \left(\frac{D+1}{D\left(D-1\right)}\right)^{\frac{1}{p}} < \infty$. Therefore, $x \in \ell_{\phi,\Phi_s}$.

\subsection{Properties Applicable to Generalized Discrete Orlicz-Morrey Spaces}
After presenting the example of an element in the discrete Orlicz-Morrey-\emph{s} spaces, it is noted by \cite{ref7} that the function $\left\lVert \cdot \right\rVert _{\ell_{\phi,\Phi}}$ in Definition 2.9 defines a norm on the discrete Orlicz-Morrey spaces $\ell_{\phi,\Phi}$. However, the function $\left\lVert \cdot \right\rVert _{\ell_{\phi,\Phi_s}}$ in the discrete Orlicz-Morrey-\emph{s} spaces defines a quasi-norm as it cannot provide the fourth criteria of norm.
\begin{proof}
    With the example condition, consider $x=y=\left( \frac{1}{{\left(1+\sqrt{2}\right)}^{2\left(\left|k\right|+1\right)}} \right)$ and $s=p=\frac{1}{2}$. By the example,it can be obtained $\left\lVert x \right\rVert _{\ell_{\phi, \Phi_s}}=\left\lVert y \right\rVert _{\ell_{\phi, \Phi_s}}=1$. Furthermore, for $x+y=\left( \frac{2}{{\left(1+\sqrt{2}\right)}^{2\left(\left|k\right|+1\right)}} \right)$, it can be obtaned that
    \begin{eqnarray*}
        \left\lVert x+y\right\rVert _{\ell_{\phi,\Phi_s}} &=& \sup_{m=0, N \in \mathbb{N}_0} \left(\inf \left\{ b > 0 \;\middle|\; \frac{1}{b^{\frac{1}{2}}} \sum_{k \in S_{0,N}} {\left|{\left(\frac{2}{\left(1+\sqrt{2}\right)^{\left|k\right|+1}} \right)}^{2}\right|}^{\frac{1}{2}} \leq 1 \right\} \right)\\
        &=& \sup_{m=0, N \in \mathbb{N}_0} \left(\inf \left\{ b > 0 \;\middle|\; \sum_{k \in S_{0,N}} \left(\frac{1}{\left(1+\sqrt{2}\right)^{\left|k\right|+1}}\right) \leq \frac{b^{\frac{1}{2}}}{2} \right\}\right)\\
        &=& \sup_{m=0, N \in \mathbb{N}_0} \left(2^2{\left(1-\frac{2\cdot{\left(\frac{1}{1+\sqrt{2}}\right)}^N}{\left(1+\sqrt{2}\right)\left(1+\sqrt{2}-1\right)}\right)}^2 \right)\\
        &=& 4.
    \end{eqnarray*}
    Because of that, $\left\lVert x+y\right\rVert _{\ell_{\phi,\Phi_s}}=4>1+1=\left\lVert x \right\rVert _{\ell_{\phi, \Phi_s}}+\left\lVert y \right\rVert _{\ell_{\phi, \Phi_s}}$. Therefore, function $\left\lVert \cdot \right\rVert _{\ell_{\phi,\Phi_s}}$ is not a norm.
\end{proof}
Before delving into the quasi-norm, the lemmas supporting its proof are presented.

\begin{lemma}
    Let $x=\left(x_k\right)$ with $x\in\ell_{\phi,\Phi_s}$. If  $\left\lVert x\right\rVert _{\phi,\Phi_s,m,N}\ne 0$, then 
    \begin{eqnarray*}
        \frac{\phi\left(2N+1\right)}{\left|S_{m,N}\right|}\sum_{k\in S_{m,N}}\Phi_s\left(\frac{\left|x_k\right|}{\left\lVert x\right\rVert _{\phi,\Phi_s,m,N}}\right)\leq1.
    \end{eqnarray*}
\end{lemma}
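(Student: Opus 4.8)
The plan is to mirror the argument for the Young-function case (Lemma 2.10), exploiting the fact that the only features of $\Phi$ actually needed there — continuity and monotonicity — are retained by any \emph{s}-Young function. Write $b_0 = \left\lVert x\right\rVert_{\phi,\Phi_s,m,N}$, which is strictly positive by hypothesis, and introduce the auxiliary quantity $F(b) = \frac{\phi\left(2N+1\right)}{\left|S_{m,N}\right|}\sum_{k\in S_{m,N}}\Phi_s\!\left(\frac{\left|x_k\right|}{b}\right)$ for $b>0$. Since $S_{m,N}$ is a finite set of cardinality $2N+1$ and $\Phi_s$ is increasing by Lemma~2.6(c), each summand $b\mapsto \Phi_s(\left|x_k\right|/b)$ is nonincreasing in $b$, so $F$ is a nonincreasing function on $(0,\infty)$. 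The goal is then precisely to show that the infimum $b_0$ defining the seminorm is attained, i.e. that $F(b_0)\le 1$.

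First I would use the definition of infimum to extract an approximating sequence: choose $(b_n)_{n\in\mathbb{N}}$ with $b_n>b_0$, $b_n\to b_0$, and $F(b_n)\le 1$ for every $n$. Such a sequence exists because $b_0$ is the infimum of the set $\{b>0 : F(b)\le 1\}$, and because $F$ is nonincreasing that set is an interval unbounded above with left endpoint $b_0$.

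Next I would pass to the limit $b_n\to b_0$. Here the positivity $b_0>0$ is essential: it guarantees that $b\mapsto \left|x_k\right|/b$ is continuous at $b_0$ (we never evaluate at $b=0$), and then the continuity of $\Phi_s$ from its \emph{s}-Young property gives $\Phi_s(\left|x_k\right|/b_n)\to \Phi_s(\left|x_k\right|/b_0)$ for each fixed $k\in S_{m,N}$. Because the sum ranges over the finite set $S_{m,N}$, the limit commutes with the summation and the constant factor $\phi\left(2N+1\right)/\left|S_{m,N}\right|$, so $F(b_n)\to F(b_0)$. Since $F(b_n)\le 1$ for all $n$, taking limits yields $F(b_0)\le 1$, which is exactly the claimed inequality.

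The step that looks like the only real obstacle — interchanging the limit with the sum — is in fact benign precisely because the sum is finite; no dominated-convergence machinery is required. The substantive point worth stressing is that the argument does \emph{not} invoke \emph{s}-convexity at all: it rests solely on the continuity and monotonicity of $\Phi_s$, both of which are part of the \emph{s}-Young structure, which is why the proof of the Young-function version carries over verbatim to $\ell_{\phi,\Phi_s}$.
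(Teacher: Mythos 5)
Your proof is correct and takes essentially the same route as the paper's: approximate the infimum $b_0=\left\lVert x\right\rVert_{\phi,\Phi_s,m,N}$ from above by admissible values of $b$, compare using the monotonicity of $\Phi_s$ (Lemma~2.6(c)), and pass to the limit over the finite sum. Your explicit appeal to the continuity of $\Phi_s$ to justify $F(b_n)\to F(b_0)$ actually makes rigorous the final step that the paper's proof leaves implicit in the phrase ``since this inequality holds for any $\varepsilon>0$.''
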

\begin{proof}
    Consider any $x=\left(x_k\right)\in \ell_{\phi, \Phi_s}$ such that $0 < \left\lVert x\right\rVert_{\phi, \Phi_s, m, N} < \infty$, and take any $\varepsilon > 0$. By the property of infimum, there exists $b_1 \in A$ such that $b_1 \leq \left\lVert x\right\rVert_{\phi, \Phi_s, m, N} + \varepsilon$. Therefore, it follows that
    \begin{eqnarray*}
        \frac{\phi\left(2N+1\right)}{\left|S_{m,N}\right|}\sum_{k\in S_{m,N}}\Phi_s\left(\frac{\left|x_k\right|}{\left\lVert x\right\rVert_{\phi, \Phi_s, m, N} + \varepsilon}\right) &\leq& \frac{\phi\left(2N+1\right)}{\left|S_{m,N}\right|}\sum_{k\in S_{m,N}}\Phi_s\left(\frac{\left|x_k\right|}{b_1}\right)\\
        &\leq& 1.
    \end{eqnarray*}
    Since this inequality holds for any $\varepsilon > 0$, it can be concluded that 
    \begin{eqnarray*}
        \frac{\phi\left(2N+1\right)}{\left|S_{m,N}\right|}\sum_{k\in S_{m,N}}\Phi_s\left(\frac{\left|x_k\right|}{\left\lVert x\right\rVert_{\phi, \Phi_s, m, N}}\right)\leq 1.
    \end{eqnarray*}
\end{proof}

\begin{lemma}
    Let $x=\left(x_k\right)$ with $x\in\ell_{\phi,\Phi_s}$. $\left\lVert x\right\rVert _{\phi,\Phi_s,m,N}\leq 1$ if and only if 
    \begin{eqnarray*}
        \frac{\phi\left(2N+1\right)}{\left|S_{m,N}\right|}\sum_{k\in S_{m,N}}\Phi_s\left(\left|x_k\right|\right)\leq1.
    \end{eqnarray*}
\end{lemma}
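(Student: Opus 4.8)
The plan is to establish the two implications of the biconditional separately, using the monotonicity of the \emph{s}-Young function (Lemma 2.6(c)) as the central tool and the preceding lemma (Lemma 3.2) to control the modular at the value $\left\lVert x\right\rVert_{\phi,\Phi_s,m,N}$. For brevity I write $\rho(b)=\frac{\phi\left(2N+1\right)}{\left|S_{m,N}\right|}\sum_{k\in S_{m,N}}\Phi_s\left(\frac{\left|x_k\right|}{b}\right)$ for the modular, so that $\left\lVert x\right\rVert_{\phi,\Phi_s,m,N}=\inf\{b>0 : \rho(b)\le 1\}$ and the inequality on the right-hand side of the statement is exactly $\rho(1)\le 1$. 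Since $x\in\ell_{\phi,\Phi_s}$ forces $\left\lVert x\right\rVert_{\phi,\Phi_s,m,N}<\infty$, the defining set is nonempty and no infinite values occur.

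For the direction ($\Leftarrow$), I would assume $\rho(1)\le 1$. Then $b=1$ lies in the set $\{b>0 : \rho(b)\le 1\}$, so its infimum $\left\lVert x\right\rVert_{\phi,\Phi_s,m,N}$ is at most $1$ immediately. This half needs no structural property of $\Phi_s$.

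For the direction ($\Rightarrow$), I would assume $\left\lVert x\right\rVert_{\phi,\Phi_s,m,N}\le 1$ and produce an admissible value $b_0\in(0,1]$, meaning one with $\rho(b_0)\le 1$, then transfer the estimate to $b=1$. If $\left\lVert x\right\rVert_{\phi,\Phi_s,m,N}>0$, I take $b_0=\left\lVert x\right\rVert_{\phi,\Phi_s,m,N}$, and Lemma 3.2 gives $\rho(b_0)\le 1$ directly. If $\left\lVert x\right\rVert_{\phi,\Phi_s,m,N}=0$, then the infimum of the defining set being $0$ means the set contains points arbitrarily close to $0$, so some $b_0\in(0,1]$ with $\rho(b_0)\le 1$ exists. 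In both cases $b_0\le 1$, hence $1/b_0\ge 1$ and $\left|x_k\right|/b_0\ge \left|x_k\right|$ for each $k$. Because $\Phi_s$ is increasing by Lemma 2.6(c), this gives $\Phi_s\left(\left|x_k\right|\right)\le \Phi_s\left(\left|x_k\right|/b_0\right)$ termwise, and summing against the positive factor $\phi\left(2N+1\right)/\left|S_{m,N}\right|$ yields $\rho(1)\le \rho(b_0)\le 1$, as required.

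I expect the only subtle point to be the case $\left\lVert x\right\rVert_{\phi,\Phi_s,m,N}=0$, where Lemma 3.2 is unavailable and one must instead use that a zero infimum still supplies an admissible $b_0\le 1$. I also note that \emph{s}-convexity plays no special role here: the argument relies solely on $\Phi_s$ being nondecreasing, so the proof mirrors the Young-function version in Lemma 2.11, and the sharper scaling bound $\Phi_s(at)\le a^s\Phi_s(t)$ from Lemma 2.6(a) is not needed.
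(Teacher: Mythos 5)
Your proof is correct and follows essentially the same route as the paper: the $(\Leftarrow)$ direction by observing that $1$ belongs to the defining set of the infimum, and the $(\Rightarrow)$ direction by combining Lemma 3.2 with the monotonicity of $\Phi_s$ from Lemma 2.6(c). You are in fact slightly more careful than the paper, whose proof simply asserts $0 < \left\lVert x\right\rVert _{\phi,\Phi_s,m,N}\leq 1$ and never addresses the case $\left\lVert x\right\rVert _{\phi,\Phi_s,m,N}=0$, which you handle by extracting an admissible $b_0\in\left(0,1\right]$ from the zero infimum.
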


\begin{proof} Let $x\in\ell_{\phi,\Phi_s}$ arbitrary.
    \begin{enumerate}
        \item [$(\Leftarrow)$] Assume that $\frac{\phi\left(2N+1\right)}{\left|S_{m,N}\right|} \sum_{k \in S_{m,N}} \Phi_s\left(\left|x_k\right|\right) \leq 1$. It will be shown that $\left\lVert x \right\rVert_{\phi, \Phi_s, m, N} \leq 1$.\\
        since
        \begin{eqnarray*}
            \frac{\phi\left(2N+1\right)}{\left|S_{m,N}\right|} \sum_{k \in S_{m,N}} \Phi_s\left(\left|x_k\right|\right) &=&\frac{\phi\left(2N+1\right)}{\left|S_{m,N}\right|} \sum_{k \in S_{m,N}} \Phi_s\left(\frac{\left|x_k\right|}{1}\right)\\
            &\leq& 1,
        \end{eqnarray*}
        this implies that $1 \in A = \left\{b>0 \;\middle|\; \frac{\phi\left(2N+1\right)}{\left|S_{m,N}\right|}\sum_{k\in S_{m,N}}\Phi_s\left(\frac{\left|x_k\right|}{b}\right) \leq 1\right\}$. Based on the definition of the infimum, $\left\lVert x \right\rVert_{\phi, \Phi_s, m, N} \leq 1$.
        \item[($\Rightarrow$)] Assume that $\left\lVert x \right\rVert_{\phi, \Phi_s, m, N} \leq 1$. It will be shown that $\frac{\phi\left(2N+1\right)}{\left|S_{m,N}\right|}\sum_{k \in S_{m,N}} \Phi_s\left(\left|x_k\right|\right) \leq 1$.\\
        Since $\left\lVert x \right\rVert_{\phi, \Phi_s, m, N} \leq 1$, it follows that $\left|x_k\right| \leq \frac{\left|x_k\right|}{\left\lVert x \right\rVert_{\phi, \Phi_s, m, N}}$. Given that $\Phi_s$ is monotonically increasing and $0 < \left\lVert x \right\rVert_{\phi, \Phi_s, m, N} \leq 1$, Lemma 3.3 results in
        \begin{eqnarray*}
            \frac{\phi\left(2N+1\right)}{\left|S_{m,N}\right|} \sum_{k \in S_{m,N}} \Phi_s\left(\left|x_k\right|\right) &\leq& \frac{\phi\left(2N+1\right)}{\left|S_{m,N}\right|} \sum_{k \in S_{m,N}} \Phi_s\left(\frac{\left|x_k\right|}{\left\lVert x \right\rVert_{\phi, \Phi_s, m, N}}\right)\\
            &\leq& 1.
        \end{eqnarray*}
    \end{enumerate}
    It is proven that $\left\lVert x \right\rVert_{\phi, \Phi_s, m, N} \leq 1$ if and only if $\frac{\phi\left(2N+1\right)}{\left|S_{m,N}\right|} \sum_{k \in S_{m,N}} \Phi_s\left(\left|x_k\right|\right) \leq 1$.
\end{proof}

\begin{lemma}
    Let  $\Phi_s : \left[0, \infty\right) \rightarrow \left[0, \infty\right)$ is a Young function and $\phi\in G_\phi$. For all $b>0$, $\frac{\phi\left(2N+1\right)}{\left|S_{m,N}\right|}\sum_{k\in S_{m,N}}\Phi_s\left(\frac{\left|x_k\right|}{b}\right)\leq1$ if and only if $\left\lVert x\right\rVert _{\phi,\Phi_s,m,N}=0$ for any $x=\left(x_k\right)\in\ell_{\phi,\Phi_s}$.
\end{lemma}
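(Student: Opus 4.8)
The plan is to prove the two implications separately, the whole argument resting on the fact that the quantity $\frac{\phi(2N+1)}{|S_{m,N}|}\sum_{k\in S_{m,N}}\Phi_s(|x_k|/b)$ is monotone decreasing in $b$. It is convenient to write
\[
A = \left\{\, b > 0 \;\middle|\; \frac{\phi(2N+1)}{|S_{m,N}|}\sum_{k \in S_{m,N}}\Phi_s\left(\frac{|x_k|}{b}\right) \leq 1 \,\right\},
\]
so that $\left\lVert x\right\rVert_{\phi,\Phi_s,m,N} = \inf A$ by Definition 3.1. The structure mirrors the Young-function case treated in Lemma 2.12; the only additional ingredient is the monotonicity of the \emph{s}-Young function furnished by Lemma 2.6(c).

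For the forward direction, I would assume the displayed inequality holds for every $b > 0$. Then every positive real number belongs to $A$, i.e.\ $A = (0,\infty)$, whence $\inf A = 0$; that is, $\left\lVert x\right\rVert_{\phi,\Phi_s,m,N} = 0$. This implication is immediate and requires nothing beyond the definition of the infimum.

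For the converse, I would assume $\left\lVert x\right\rVert_{\phi,\Phi_s,m,N} = \inf A = 0$ and fix an arbitrary $b > 0$. Since $\inf A = 0 < b$, the characterizing property of the infimum produces some $b_1 \in A$ with $b_1 < b$. Because $b_1 < b$ gives $|x_k|/b \leq |x_k|/b_1$ for each index $k$, and $\Phi_s$ is increasing by Lemma 2.6(c), I obtain $\Phi_s(|x_k|/b) \leq \Phi_s(|x_k|/b_1)$ termwise. Summing and multiplying through by the positive factor $\phi(2N+1)/|S_{m,N}|$ then yields
\[
\frac{\phi(2N+1)}{|S_{m,N}|}\sum_{k \in S_{m,N}}\Phi_s\left(\frac{|x_k|}{b}\right) \;\leq\; \frac{\phi(2N+1)}{|S_{m,N}|}\sum_{k \in S_{m,N}}\Phi_s\left(\frac{|x_k|}{b_1}\right) \;\leq\; 1,
\]
the last inequality holding because $b_1 \in A$. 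Since $b > 0$ was arbitrary, the inequality holds for all $b > 0$, which completes the converse.

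The step carrying the real content is the converse: one cannot simply evaluate the sum at $b = \inf A = 0$, so the monotonicity of $\Phi_s$ is exactly what is needed to transport the inequality from a witness $b_1$ close to $0$ to every larger $b$. I expect this to be the only place demanding care; the forward direction and the bookkeeping with the positive factor $\phi(2N+1)/|S_{m,N}|$ are routine.
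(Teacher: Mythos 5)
Your proof is correct, and it reorganizes the paper's argument in a way worth noting. The easy direction is identical in both: if the inequality holds for every $b>0$, then every positive number lies in $A$, so $\inf A = 0$. For the converse, however, the paper argues by contradiction: it supposes some $\varepsilon_0>0$ violates the inequality, picks an element $b_1 \in A$, and splits into the cases $b_1 < \varepsilon_0$ (where monotonicity of $\Phi_s$ yields a contradiction, which is exactly your computation) and $b_1 > \varepsilon_0$ (where it asserts from this single element that $\left\lVert x\right\rVert_{\phi,\Phi_s,m,N} \geq \varepsilon_0$ ``by the definition of the infimum''). Your direct argument uses the same two ingredients --- the characterizing property of the infimum and the monotonicity of $\Phi_s$ from Lemma 2.6(c) --- but runs them forward: since $\inf A = 0 < b$, there is a witness $b_1 \in A$ with $b_1 < b$, and monotonicity transports membership in $A$ from $b_1$ up to $b$. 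Besides being cleaner, this sidesteps a soft spot in the paper's Case 2: from one element $b_1 > \varepsilon_0$ of $A$ one cannot conclude $\inf A \geq \varepsilon_0$; that conclusion requires every element of $A$ to exceed $\varepsilon_0$, which the paper's case split as written does not establish. By choosing the witness below the target $b$ at the outset, your version needs no case analysis at all and is the tighter proof.
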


\begin{proof}
    Let  $\Phi_s : \left[0, \infty\right) \rightarrow \left[0, \infty\right)$ is a Young function and $\phi\in G_\phi$.
    \begin{enumerate}
        \item [$(\Leftarrow)$] Assume that for every $b > 0$, the inequality $\frac{\phi(2N+1)}{\left|S_{m,N}\right|} \sum_{k \in S_{m,N}} \Phi_s\left(\frac{\left|x_k\right|}{b}\right) \leq 1$ holds. It will be shown that $\left\lVert x \right\rVert_{\phi, \Phi_s, m, N} = 0$ for every $x = \left(x_k\right) \in \ell_{\phi, \Phi_s}$.\\
        Since 
        \begin{eqnarray*}
            \frac{\phi(2N+1)}{\left|S_{m,N}\right|} \sum_{k \in S_{m,N}} \Phi_s\left(\frac{\left|x_k\right|}{\varepsilon}\right) \leq 1
        \end{eqnarray*}
        for every $\varepsilon > 0$, it follows that $\varepsilon \in \left\{b > 0 \;\middle|\; \frac{\phi(2N+1)}{\left|S_{m,N}\right|} \sum_{k \in S_{m,N}} \Phi_s\left(\frac{\left|x_k\right|}{b}\right) \leq 1\right\}$, which implies $0 \leq \left\lVert x \right\rVert_{\phi, \Phi_s, m, N} \leq \varepsilon$. Therefore, it can be concluded that $\left\lVert x \right\rVert_{\phi, \Phi_s, m, N} = 0$.
        
        \item[($\Rightarrow$)] Assume that $\left\lVert x \right\rVert_{\phi, \Phi_s, m, N} = 0$ for every $x = \left(x_k\right) \in \ell_{\phi, \Phi_s}$. It will be shown that 
        \begin{eqnarray*}
            \frac{\phi(2N+1)}{\left|S_{m,N}\right|} \sum_{k \in S_{m,N}} \Phi_s\left(\frac{\left|x_k\right|}{b}\right) \leq 1
        \end{eqnarray*}
        for every $b > 0$.\\
        Let $A = \left\{b > 0 \;\middle|\; \frac{\phi(2N+1)}{\left|S_{m,N}\right|} \sum_{k \in S_{m,N}} \Phi_s\left(\frac{\left|x_k\right|}{b}\right) \leq 1\right\}.$ To demonstrate this by contradiction, assume there exists $\varepsilon_0 > 0$ such that $\frac{\phi(2N+1)}{\left|S_{m,N}\right|} \sum_{k \in S_{m,N}} \Phi_s\left(\frac{\left|x_k\right|}{\varepsilon_0}\right) > 1.$ This implies $\varepsilon_0 \notin A$. Next, take any $b_1 \in A$ such that $b_1 \neq \varepsilon_0$ and $\left\lVert x \right\rVert_{\phi, \Phi_s, m, N} \leq b_1$. Consider the following two cases:\\
        Case 1: If $b_1 < \varepsilon_0$, then $\frac{1}{\varepsilon_0} < \frac{1}{b_1}$. Consequently, it follows that
        \begin{eqnarray*}
            \frac{\phi(2N+1)}{\left|S_{m,N}\right|} \sum_{k \in S_{m,N}} \Phi_s\left(\frac{\left|x_k\right|}{b}\right) < \frac{\phi(2N+1)}{\left|S_{m,N}\right|} \sum_{k \in S_{m,N}} \Phi_s\left(\frac{\left|x_k\right|}{b_1}\right) \leq 1.
        \end{eqnarray*}
        This contradicts the assumption that $\frac{\phi(2N+1)}{\left|S_{m,N}\right|} \sum_{k \in S_{m,N}} \Phi_s\left(\frac{\left|x_k\right|}{b}\right) > 1.$\\
        
        Case 2: If $b_1 > \varepsilon_0$, by the definition of the infimum, $\left\lVert x \right\rVert_{\phi, \Phi_s, m, N} \geq \varepsilon_0 > 0$. This contradicts the assumption that $\left\lVert x \right\rVert_{\phi, \Phi_s, m, N} = 0$.
    \end{enumerate}
    From both cases, it follows that $\frac{\phi(2N+1)}{\left|S_{m,N}\right|} \sum_{k \in S_{m,N}} \Phi_s\left(\frac{\left|x_k\right|}{\varepsilon}\right) \leq 1$ for every $\varepsilon > 0$.
    It is proven that for every $b > 0$, $ \frac{\phi(2N+1)}{\left|S_{m,N}\right|} \sum_{k \in S_{m,N}} \Phi_s\left(\frac{\left|x_k\right|}{b}\right) \leq 1$ if and only if $\left\lVert x \right\rVert_{\phi, \Phi_s, m, N} = 0$ for every $x = \left(x_k\right) \in \ell_{\phi, \Phi_s}$.
\end{proof}

\begin{lemma}
    Let  $\Phi_s : \left[0, \infty\right) \rightarrow \left[0, \infty\right)$ is a Young function and $\phi\in G_\phi$. For all $b \geq 1$, $\frac{\phi\left(2N+1\right)}{\left|S_{m,N}\right|} \sum_{k \in S_{m,N}} \Phi_s\left(\frac{\left|x_k\right|}{b}\right) \leq \frac{1}{b^s}$ if and only if $\left\lVert x \right\rVert_{\phi,\Phi_s,m,N} \leq 1$ for every $x = \left(x_k\right) \in \ell_{\phi,\Phi_s}$.
\end{lemma}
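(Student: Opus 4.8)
The plan is to establish the two implications separately, and the whole argument rests on just two ingredients already available: the $s$-scaling estimate of Lemma 2.6(a), namely $\Phi_s(at)\le a^s\Phi_s(t)$ whenever $0\le a\le 1$, and the characterization of Lemma 3.4, which says that $\left\lVert x\right\rVert_{\phi,\Phi_s,m,N}\le 1$ holds precisely when $\frac{\phi(2N+1)}{\left|S_{m,N}\right|}\sum_{k\in S_{m,N}}\Phi_s(\left|x_k\right|)\le 1$. These two facts dovetail neatly, since the target inequality at $b=1$ is exactly the condition appearing in Lemma 3.4.

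For the forward direction ($\Rightarrow$), I would simply specialize the hypothesis to $b=1$. Because $\frac{1}{1^s}=1$, the assumed inequality at $b=1$ reduces to $\frac{\phi(2N+1)}{\left|S_{m,N}\right|}\sum_{k\in S_{m,N}}\Phi_s(\left|x_k\right|)\le 1$, and then Lemma 3.4 immediately delivers $\left\lVert x\right\rVert_{\phi,\Phi_s,m,N}\le 1$. No estimation is needed in this direction; it is a one-line specialization.

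For the converse ($\Leftarrow$), I would assume $\left\lVert x\right\rVert_{\phi,\Phi_s,m,N}\le 1$ and fix an arbitrary $b\ge 1$. Setting $a:=\tfrac{1}{b}\in(0,1]$, Lemma 2.6(a) gives, for each index $k$, the pointwise bound $\Phi_s\!\left(\tfrac{\left|x_k\right|}{b}\right)=\Phi_s(a\left|x_k\right|)\le a^s\Phi_s(\left|x_k\right|)=b^{-s}\Phi_s(\left|x_k\right|)$. Summing over $k\in S_{m,N}$ and multiplying by $\frac{\phi(2N+1)}{\left|S_{m,N}\right|}$, I can pull the constant $b^{-s}$ outside the sum, and then invoke Lemma 3.4 (applied to the hypothesis $\left\lVert x\right\rVert_{\phi,\Phi_s,m,N}\le 1$) to bound the remaining average by $1$, yielding $\frac{\phi(2N+1)}{\left|S_{m,N}\right|}\sum_{k\in S_{m,N}}\Phi_s\!\left(\tfrac{\left|x_k\right|}{b}\right)\le b^{-s}=\tfrac{1}{b^s}$, as required.

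The computation is essentially mechanical, so the ``hard part'' is really a matter of bookkeeping about the scaling factor: Lemma 2.6(a) is valid \emph{only} for scaling factors in $[0,1]$, and this is exactly what the hypothesis $b\ge 1$ guarantees via $\tfrac{1}{b}\le 1$. This is the crucial distinction from the earlier Lemmas 3.5, which range over all $b>0$; for $b<1$ the inequality $\Phi_s(|x_k|/b)\le b^{-s}\Phi_s(|x_k|)$ would point the wrong way, so it is essential that the statement restricts to $b\ge 1$. I would also remark that the degenerate case $\left\lVert x\right\rVert_{\phi,\Phi_s,m,N}=0$ needs no separate treatment, since it still satisfies $\left\lVert x\right\rVert_{\phi,\Phi_s,m,N}\le 1$ and Lemma 3.4 applies verbatim.
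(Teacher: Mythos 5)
Your proof is correct and takes essentially the same route as the paper's: one direction by specializing the hypothesis to $b=1$, the other by the scaling bound $\Phi_s\left(\left|x_k\right|/b\right)\le b^{-s}\,\Phi_s\left(\left|x_k\right|\right)$ from Lemma 2.6(a) combined with the characterization of $\left\lVert x\right\rVert_{\phi,\Phi_s,m,N}\le 1$ (the paper reaches the same bound via monotonicity of $\Phi_s$ and Lemma 3.2, which is exactly how that characterization is itself proved, so the difference is cosmetic). One citation slip to fix: the characterization you invoke --- $\left\lVert x\right\rVert_{\phi,\Phi_s,m,N}\le 1$ if and only if $\frac{\phi\left(2N+1\right)}{\left|S_{m,N}\right|}\sum_{k\in S_{m,N}}\Phi_s\left(\left|x_k\right|\right)\le 1$ --- is Lemma 3.3, not Lemma 3.4; Lemma 3.4 is the statement that the averaged sum is at most $1$ for all $b>0$ if and only if $\left\lVert x\right\rVert_{\phi,\Phi_s,m,N}=0$.
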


\begin{proof} Let  $\Phi_s : \left[0, \infty\right) \rightarrow \left[0, \infty\right)$ is a Young function and $\phi\in G_\phi$.
    \begin{enumerate} 
        \item [$(\Leftarrow)$] Assume that for every $b \geq 1$, the inequality $ \frac{\phi\left(2N+1\right)}{\left|S_{m,N}\right|} \sum_{k \in S_{m,N}} \Phi_s\left(\frac{\left|x_k\right|}{b}\right) \leq \frac{1}{b^s}$ holds. It will be shown that $\left\lVert x \right\rVert_{\phi,\Phi_s,m,N} \leq 1$ for every $x = \left(x_k\right) \in \ell_{\phi,\Phi_s}$.\\
        Since $b \geq 1$ and $0 < s \leq 1$, it follows that $0 < \frac{1}{b} \leq \frac{1}{b^s} \leq 1$. This implies
        \begin{eqnarray*}
            \frac{\phi\left(2N+1\right)}{\left|S_{m,N}\right|} \sum_{k \in S_{m,N}} \Phi_s\left(\frac{\left|x_k\right|}{b}\right) \leq \frac{1}{b^s} \leq 1
        \end{eqnarray*}
        for every $b \geq 1$. By choosing $b = 1$, it follows that 
        \begin{eqnarray*}
            \frac{\phi\left(2N+1\right)}{\left|S_{m,N}\right|} \sum_{k \in S_{m,N}} \Phi_s\left(\frac{\left|x_k\right|}{1}\right) \leq 1,
        \end{eqnarray*}
        which means $1 \in \left\{b > 0 \;\middle|\; \frac{\phi\left(2N+1\right)}{\left|S_{m,N}\right|} \sum_{k \in S_{m,N}} \Phi_s\left(\frac{\left|x_k\right|}{b}\right) \leq 1\right\}$. By the definition of the infimum, $\left\lVert x \right\rVert_{\phi,\Phi_s,m,N} \leq 1$.
    
        \item[($\Rightarrow$)] Assume that $\left\lVert x \right\rVert_{\phi,\Phi_s,m,N} \leq 1$ for every $x = \left(x_k\right) \in \ell_{\phi,\Phi_s}$. It will be shown that 
        \begin{eqnarray*}
            \frac{\phi\left(2N+1\right)}{\left|S_{m,N}\right|} \sum_{k \in S_{m,N}} \Phi_s\left(\frac{\left|x_k\right|}{b}\right) \leq \frac{1}{b^s}
        \end{eqnarray*}
        for every $b \geq 1$.\\
        Since $\left\lVert x \right\rVert_{\phi,\Phi_s,m,N} \leq 1$, it follows that $\left|x_k\right| \leq \frac{\left|x_k\right|}{\left\lVert x \right\rVert_{\phi,\Phi_s,m,N}}$. Given $0 < \frac{1}{b} \leq 1$ and $0 < \left\lVert x \right\rVert_{\phi,\Phi_s,m,N} \leq 1$, and based on Lemma 2.6.(a) and Lemma 3.2, it follows that 
        \begin{eqnarray*}
            \frac{\phi\left(2N+1\right)}{\left|S_{m,N}\right|} \sum_{k \in S_{m,N}} \Phi_s\left(\frac{\left|x_k\right|}{b}\right) &\leq& \frac{1}{b^s} \frac{\phi\left(2N+1\right)}{\left|S_{m,N}\right|} \sum_{k \in S_{m,N}} \Phi_s\left(\left|x_k\right|\right) \\   &\leq& \frac{1}{b^s} \frac{\phi\left(2N+1\right)}{\left|S_{m,N}\right|} \sum_{k \in S_{m,N}} \Phi_s\left(\frac{\left|x_k\right|}{\left\lVert x \right\rVert_{\phi,\Phi_s,m,N}}\right) \\
            &\leq& \frac{1}{b^s}.
        \end{eqnarray*}
    \end{enumerate}
     Thus, it is proven that for every $b \geq 1$, $\frac{\phi\left(2N+1\right)}{\left|S_{m,N}\right|} \sum_{k \in S_{m,N}} \Phi_s\left(\frac{\left|x_k\right|}{b}\right) \leq \frac{1}{b^s}$ if and only if $\left\lVert x \right\rVert_{\phi,\Phi_s,m,N} \leq 1$ for every $x = \left(x_k\right) \in \ell_{\phi,\Phi_s}$.
\end{proof}

\begin{lemma}
    Let $\Phi_s : \left[0, \infty\right) \rightarrow \left[0, \infty\right)$ is a Young function,  $\phi\in G_\phi$, and $x=\left(x_k\right)$ with $x\in\ell_{\phi,\Phi_s}$. For each $a>0$, $\frac{\phi\left(2N+1\right)}{\left|S_{m,N}\right|}\sum_{k\in S_{m,N}}\Phi_s\left(a\left|x_k\right|\right)=0$ if and only if $\left\lVert x\right\rVert _{\phi,\Phi_s,m,N}=0$.
\end{lemma}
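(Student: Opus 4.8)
The plan is to prove both implications by reducing each side of the equivalence to the single condition that $x_k=0$ for every $k\in S_{m,N}$. Throughout I would write $C=\frac{\phi(2N+1)}{\left|S_{m,N}\right|}$, which is strictly positive since $\phi$ takes values in $(0,\infty)$ and $\left|S_{m,N}\right|=2N+1$. I would also record at the outset the analytic facts the argument needs: as stated in the introduction, the \emph{s}-Young function $\Phi_s$ is continuous, takes the value $0$ only at the origin, and satisfies $\lim_{t\to\infty}\Phi_s(t)=\infty$.

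For the forward direction, fix $a>0$ and suppose $C\sum_{k\in S_{m,N}}\Phi_s(a\left|x_k\right|)=0$. Since every summand is nonnegative and $C>0$, each term must vanish, that is $\Phi_s(a\left|x_k\right|)=0$ for all $k\in S_{m,N}$. Because $\Phi_s$ takes the value $0$ only at the origin and $a>0$, this forces $x_k=0$ for every $k\in S_{m,N}$. Consequently $C\sum_{k\in S_{m,N}}\Phi_s\!\left(\frac{\left|x_k\right|}{b}\right)=C\sum_{k\in S_{m,N}}\Phi_s(0)=0\le 1$ for every $b>0$, so Lemma 3.4 applies and yields $\left\lVert x\right\rVert_{\phi,\Phi_s,m,N}=0$.

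For the reverse direction, assume $\left\lVert x\right\rVert_{\phi,\Phi_s,m,N}=0$. By Lemma 3.4 this gives $C\sum_{k\in S_{m,N}}\Phi_s\!\left(\frac{\left|x_k\right|}{b}\right)\le 1$ for every $b>0$, and discarding all but one nonnegative summand yields the uniform bound $\Phi_s\!\left(\frac{\left|x_{k_0}\right|}{b}\right)\le\frac{1}{C}$ for each fixed $k_0\in S_{m,N}$ and all $b>0$. Suppose toward a contradiction that $x_{k_0}\ne 0$ for some $k_0$; then $\frac{\left|x_{k_0}\right|}{b}\to\infty$ as $b\to 0^+$, whence property (d) gives $\Phi_s\!\left(\frac{\left|x_{k_0}\right|}{b}\right)\to\infty$, contradicting the bound $\frac{1}{C}$. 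Hence $x_k=0$ for all $k\in S_{m,N}$, and therefore $C\sum_{k\in S_{m,N}}\Phi_s(a\left|x_k\right|)=C\sum_{k\in S_{m,N}}\Phi_s(0)=0$ for each $a>0$.

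I expect the main obstacle to be the forward direction's reliance on the implication $\Phi_s(t)=0\Rightarrow t=0$: this ``vanishes only at the origin'' property is asserted in the introduction but is not literally one of the four conditions listed in Definition 2.5, so I would make sure to invoke it explicitly (or, if one prefers to avoid it, recover $x_{k_0}=0$ by the same $b\to 0^+$ limiting trick used in the reverse direction). The reverse direction is otherwise routine once Lemma 3.4 is in hand, its only genuine step being the limit $b\to 0^+$ combined with the growth condition $\lim_{t\to\infty}\Phi_s(t)=\infty$.
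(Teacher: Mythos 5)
The forward direction of your proof has a genuine gap. You fix a single $a>0$ and pass from $\Phi_s\left(a\left|x_k\right|\right)=0$ to $x_k=0$ by invoking the claim that $\Phi_s$ vanishes only at the origin. As you yourself note, that claim appears in the paper's introduction but is not among the four conditions of Definition 2.5, and it genuinely does not follow from them: $\Phi_s(t)=\max(0,t-1)$ is convex with $\Phi_s(0)=0$, hence $s$-convex by Lemma 2.3, continuous, and tends to infinity, so it is an $s$-Young function, yet it vanishes on all of $\left[0,1\right]$. Taking this $\Phi_s$, $\phi(2N+1)=2N+1$, and $x_k\equiv\frac{1}{2}$ (which lies in $\ell_{\phi,\Phi_s}$, with $\left\lVert x\right\rVert_{\ell_{\phi,\Phi_s}}=\frac{1}{2}$), for $a=1$ one gets $\frac{\phi\left(2N+1\right)}{\left|S_{m,N}\right|}\sum_{k\in S_{m,N}}\Phi_s\left(a\left|x_k\right|\right)=0$ while $\left\lVert x\right\rVert_{\phi,\Phi_s,m,N}=\frac{2N+1}{4N+4}>0$; so the fixed-$a$ implication you set out to prove is false under Definition 2.5. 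Your suggested fallback (the $b\to 0^{+}$ trick) cannot repair this, because knowing that $\Phi_s$ vanishes at the single point $a\left|x_{k_0}\right|$ gives you no limit to take. The statement is true only under the reading that the sum vanishes for \emph{every} $a>0$, which is exactly how the paper reads it: its proof observes that $\frac{1}{a}$ then lies in the infimum set for every $a$, so $\left\lVert x\right\rVert_{\phi,\Phi_s,m,N}\leq\frac{1}{a}\to 0$, with no pointwise property of $\Phi_s$ needed at all. If you adopt that quantifier, you can either argue as the paper does, or keep your reduction to $x_k=0$ but justify it by letting $a\to\infty$ and using $\lim_{t\to\infty}\Phi_s(t)=\infty$ rather than the unlisted ``zero only at the origin'' property.

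Your reverse direction, by contrast, is correct and takes a genuinely different route from the paper. The paper scales by $\varepsilon\in(0,1)$ and combines the $s$-convexity estimate of Lemma 2.6(a) with Lemma 3.4 to get $\Phi_s\left(a\left|x_k\right|\right)\leq\varepsilon^{s}$ for all small $\varepsilon$; you instead combine Lemma 3.4 with the growth condition and a $b\to 0^{+}$ limit to conclude $x_k=0$ outright. Your version has two small advantages: it keeps track of the constant $\frac{\phi\left(2N+1\right)}{\left|S_{m,N}\right|}$, which the paper silently drops when it writes $\Phi_s\left(a\left|x_k\right|/\varepsilon\right)\leq 1$, and it delivers the stronger conclusion $x_k=0$ on $S_{m,N}$, which is precisely the step that Theorem 3.7(2) later uses after citing this lemma but never justifies.
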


\begin{proof}
    \begin{enumerate}
        \item [$(\Leftarrow)$] Assume that for every $a > 0$, the equation $\frac{\phi\left(2N+1\right)}{\left|S_{m,N}\right|} \sum_{k \in S_{m,N}} \Phi_s\left(a\left|x_k\right|\right) = 0$ holds. It will be shown that $\left\lVert x \right\rVert_{\phi,\Phi_s,m,N} = 0$.
        Let $a > 0$ be arbitrary. Note that 
        \begin{eqnarray*}
            \frac{\phi\left(2N+1\right)}{\left|S_{m,N}\right|} \sum_{k \in S_{m,N}} \Phi_s\left(a\left|x_k\right|\right) = 0 \leq 1.
        \end{eqnarray*}
        This implies 
        \begin{eqnarray*}
            \frac{\phi\left(2N+1\right)}{\left|S_{m,N}\right|} \sum_{k \in S_{m,N}} \Phi_s\left(\frac{\left|x_k\right|}{\frac{1}{a}}\right) \leq 1,
        \end{eqnarray*}
        so that $\frac{1}{a} \in \left\{b > 0 \;\middle|\; \frac{\phi\left(2N+1\right)}{\left|S_{m,N}\right|} \sum_{k \in S_{m,N}} \Phi_s\left(\frac{\left|x_k\right|}{b}\right) \leq 1\right\}$. By the definition of the infimum, $0 \leq \left\lVert x \right\rVert_{\phi,\Phi_s,m,N} \leq \frac{1}{a}$ for every $a > 0$. Consequently, $\left\lVert x \right\rVert_{\phi,\Phi_s,m,N} = 0$.
    
        \item[($\Rightarrow$)] Assume that $\left\lVert x \right\rVert_{\phi,\Phi_s,m,N} = 0$. It will be shown that $\frac{\phi\left(2N+1\right)}{\left|S_{m,N}\right|} \sum_{k \in S_{m,N}} \Phi_s\left(a\left|x_k\right|\right) = 0$ for every $a > 0$\\
        Let $0 < \varepsilon < 1$ be arbitrary. Using the Lemma 2.6.(a) and Lemma 3.4, it follows that 
        \begin{eqnarray*}
            \Phi_s\left(a\left|x_k\right|\right) = \Phi_s\left(\varepsilon\left(\frac{a\left|x_k\right|}{\varepsilon}\right)\right) \leq \varepsilon^s \Phi_s\left(\frac{a\left|x_k\right|}{\varepsilon}\right) \leq \varepsilon^s \cdot 1 = \varepsilon^s.
        \end{eqnarray*}
        Since $0 \leq \Phi_s\left(a\left|x_k\right|\right) \leq \varepsilon^s$ for every $0 < \varepsilon^s < 1$, it follows that $\Phi_s\left(a\left|x_k\right|\right) = 0$. 
    \end{enumerate}
    Thus, it has been proven that for every $a > 0$, $\frac{\phi\left(2N+1\right)}{\left|S_{m,N}\right|} \sum_{k \in S_{m,N}} \Phi_s\left(a\left|x_k\right|\right) = 0$ if and only if $\left\lVert x \right\rVert_{\phi,\Phi_s,m,N} = 0$.
\end{proof}

After showing the Lemmas, The distinction between $\left\lVert x \right\rVert _{\ell_{\phi,\Phi}}$ and $\left\lVert x \right\rVert _{\ell_{\phi,\Phi_s}}$ is further clarified in next theorem.

\begin{theorem}
    The nonnegative function $\left\lVert \cdot \right\rVert _{\ell_{\phi,\Phi_s}}$ is quasi-norm.
\end{theorem}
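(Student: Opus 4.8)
The plan is to check the four conditions in the definition of a quasi-norm (Definition 2.15), dispatching nonnegativity, definiteness, and homogeneity quickly and reserving the bulk of the work for the quasi-triangle inequality. Condition (a) is immediate: each $\left\lVert x\right\rVert_{\phi,\Phi_s,m,N}$ is an infimum over a subset of $(0,\infty)$, hence nonnegative, and the outer supremum preserves this. For condition (b), if $x=0$ then $\Phi_s(0)=0$ places every $b>0$ in the defining set, forcing each local quantity and hence $\left\lVert x\right\rVert_{\ell_{\phi,\Phi_s}}$ to vanish; conversely, if the norm is zero then every $\left\lVert x\right\rVert_{\phi,\Phi_s,m,N}=0$, so Lemma 3.6 gives $\Phi_s(a\left|x_k\right|)=0$ for every $k\in S_{m,N}$ and every $a>0$, and since $\Phi_s$ is increasing with $\lim_{t\to\infty}\Phi_s(t)=\infty$ this forces $x_k=0$; letting $m,N$ vary gives $x=0$. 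For the homogeneity (c) with $a\neq 0$, I would perform the change of variable $c=b/\left|a\right|$ inside the infimum defining $\left\lVert ax\right\rVert_{\phi,\Phi_s,m,N}$; this substitution is untouched by $s$-convexity and produces $\left\lVert ax\right\rVert_{\phi,\Phi_s,m,N}=\left|a\right|\left\lVert x\right\rVert_{\phi,\Phi_s,m,N}$, while $a=0$ falls under (b). Taking the supremum over $m,N$ transfers (a)--(c) to $\left\lVert\cdot\right\rVert_{\ell_{\phi,\Phi_s}}$.

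The main obstacle is condition (d), since the ordinary triangle inequality already fails (as shown in the preceding example-based proof) and must be replaced by a quasi-triangle inequality with an explicit constant $C\geq 1$. Fix $m,N$ and write $X=\left\lVert x\right\rVert_{\phi,\Phi_s,m,N}$ and $Y=\left\lVert y\right\rVert_{\phi,\Phi_s,m,N}$, assuming both nonzero. The decisive idea is to invoke $s$-convexity with weights whose $s$-th powers sum to one: taking $a=X/b$ and $c=Y/b$, the requirement $a^s+c^s=1$ forces $b=(X^s+Y^s)^{1/s}$. With this $b$ one has the identity
\begin{eqnarray*}
    \frac{\left|x_k\right|+\left|y_k\right|}{b}=a\cdot\frac{\left|x_k\right|}{X}+c\cdot\frac{\left|y_k\right|}{Y},
\end{eqnarray*}
so that $\left|x_k+y_k\right|\leq\left|x_k\right|+\left|y_k\right|$, monotonicity of $\Phi_s$ (Lemma 2.6(c)), $s$-convexity, and Lemma 3.2 combine to give
\begin{eqnarray*}
    \frac{\phi(2N+1)}{\left|S_{m,N}\right|}\sum_{k\in S_{m,N}}\Phi_s\!\left(\frac{\left|x_k+y_k\right|}{b}\right)\leq a^s\cdot 1+c^s\cdot 1=1.
\end{eqnarray*}
Thus $b$ belongs to the defining set, whence $\left\lVert x+y\right\rVert_{\phi,\Phi_s,m,N}\leq(X^s+Y^s)^{1/s}$.

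To finish, I would convert $(X^s+Y^s)^{1/s}$ into a multiple of $X+Y$. Because $0<s\leq 1$, the map $t\mapsto t^{1/s}$ is convex, so applying convexity to the points $X^s$ and $Y^s$ gives the power-mean estimate $(X^s+Y^s)^{1/s}\leq 2^{\frac{1}{s}-1}(X+Y)$. Consequently
\begin{eqnarray*}
    \left\lVert x+y\right\rVert_{\phi,\Phi_s,m,N}\leq 2^{\frac{1}{s}-1}\bigl(\left\lVert x\right\rVert_{\phi,\Phi_s,m,N}+\left\lVert y\right\rVert_{\phi,\Phi_s,m,N}\bigr),
\end{eqnarray*}
and taking the supremum over $m,N$ yields $\left\lVert x+y\right\rVert_{\ell_{\phi,\Phi_s}}\leq C\bigl(\left\lVert x\right\rVert_{\ell_{\phi,\Phi_s}}+\left\lVert y\right\rVert_{\ell_{\phi,\Phi_s}}\bigr)$ with $C=2^{\frac{1}{s}-1}$. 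Here $C\geq 1$ precisely because $s\leq 1$, and $C=1$ when $s=1$, consistent with $\ell_{\phi,\Phi_s}$ collapsing to the genuine norm of $\ell_{\phi,\Phi}$. The degenerate cases $X=0$ or $Y=0$ need separate handling: by Lemma 3.6 a vanishing local quantity forces that sequence to be zero on $S_{m,N}$, reducing the estimate to a triviality. I anticipate that the subtle step is the weight calibration leading to $b=(X^s+Y^s)^{1/s}$ and the verification that this $b$ genuinely lies in the defining set, rather than the concluding power-mean inequality, which is routine.
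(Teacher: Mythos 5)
Your proposal is correct, and its core coincides with the paper's proof: conditions (a)--(c) are dispatched the same way (infimum over a subset of $(0,\infty)$, Lemma 3.6 for definiteness, the substitution $c=b/\left|a\right|$ for homogeneity), and for the quasi-triangle inequality both arguments hinge on the same calibration of $s$-convex weights, yielding the identical intermediate bound $\left\lVert x+y\right\rVert_{\phi,\Phi_s,m,N}\leq\left(X^s+Y^s\right)^{1/s}$ --- the paper writes this as $C^{1/s}(X+Y)$ with $C=\frac{X^s+Y^s}{(X+Y)^s}$, which is the same number. Where you genuinely diverge is the finishing step, and your version is the cleaner one. The paper's constant $C$ depends on $X$ and $Y$, hence on $x$, $y$, $m$, and $N$; a quasi-norm requires a single constant valid for all pairs, so the paper must still invoke its estimate $1\leq C<2$ to extract a uniform bound (effectively $2^{1/s}$), and moreover its Case 3 silently assumes $X,Y>0$, which can fail for particular $m,N$ even when $x$ and $y$ are not globally zero --- a gap your local treatment of the degenerate cases via Lemma 3.6 closes. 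Your route instead converts $(X^s+Y^s)^{1/s}$ into a multiple of $X+Y$ by convexity of $t\mapsto t^{1/s}$, producing at once a uniform and in fact sharper constant $2^{\frac{1}{s}-1}\leq 2^{1/s}$, which correctly collapses to $1$ when $s=1$, recovering the norm case of $\ell_{\phi,\Phi}$. In short: same key idea, but your closing both tightens the constant and repairs the uniformity issue latent in the paper's formulation.
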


\begin{proof}
    Consider any sequences $x=\left(x_k\right)$ and $y=\left(y_k\right)$ where $x, y \in \ell_{\phi, \Phi_s}$. To demonstrate that $\left\lVert x \right\rVert _{\ell_{\phi,\Phi_s}}$ is a quasi-norm function, it is necessary to verify the following four conditions based on Definition 2.15:

    \begin{enumerate}
        \item It will be shown that $\left\lVert x \right\rVert _{\ell_{\phi,\Phi_s}}\geq 0$.
        Let $A = \left\{b>0 \;\middle|\; \frac{\phi\left(2N+1\right)}{\left|S_{m,N}\right|}\sum_{k\in S_{m,N}}\Phi_s\left(\frac{\left|x_k\right|}{b}\right) \leq 1\right\}$. Since $\left\lVert x \right\rVert _{\phi, \Phi_s, m, N}=\inf A$, by the definition of the infimum, $0 \leq \|x\|_{\phi, \Phi_s, m, N} \leq b$ for every $b \in A$. Thus, $\left\lVert x \right\rVert _{\phi, \Phi_s, m, N}\geq 0$. Therefore, by the definition of $\left\lVert \cdot \right\rVert _{\ell_{\phi,\Phi_s}}$,
        \begin{eqnarray*}
            \left\lVert x \right\rVert _{\ell_{\phi,\Phi_s}}=\sup_{m \in \mathbb{Z}, N \in \mathbb{N}_0} \left\lVert x \right\rVert _{\phi, \Phi_s, m, N}\geq 0
        \end{eqnarray*}
        Hence, it is proved that $\left\lVert x \right\rVert _{\ell_{\phi,\Phi_s}}\geq 0$.
        
        \item It will be shown that $\left\lVert x \right\rVert _{\ell_{\phi,\Phi_s}}=0$ if and only if $x=0$.
        \begin{enumerate}
            \item [$(\Leftarrow)$] Assume $x = \left(x_k\right) = 0$. It will be demonstrated that $\left\lVert x \right\rVert _{\ell_{\phi,\Phi_s}}=0$. Take any $b>0$, $m \in \mathbb{Z}$, and $N \in \mathbb{N}_0$, observe that 
            \begin{eqnarray*}
                \frac{\phi\left(2N+1\right)}{\left|S_{m,N}\right|}\sum_{k\in S_{m,N}}\Phi_s\left(\frac{\left|x_k\right|}{b}\right)=0 \leq 1
            \end{eqnarray*}
            Therefore, $b \in A$, and it follows that $\left\lVert x \right\rVert _{\phi, \Phi_s, m, N}\leq b$ for every $b>0$, implying $\left\lVert x \right\rVert _{\phi, \Phi_s, m, N}=0$. By definition of $\left\lVert x \right\rVert _{\ell_{\phi,\Phi_s}}$,
            \begin{eqnarray*}
                \left\lVert x \right\rVert _{\ell_{\phi,\Phi_s}}&=&\sup_{m \in \mathbb{Z}, N \in \mathbb{N}_0} \left\lVert x \right\rVert _{\phi, \Phi_s, m, N}\\
                   &=&0.
            \end{eqnarray*}
               
            \item[($\Rightarrow$)] Assume $\left\lVert x \right\rVert _{\ell_{\phi,\Phi_s}}=0$. Observe that $\left\lVert x\right\rVert _{\ell_{\phi,\Phi_s}} =\sup_{m\in\mathbb{Z}, N\in\mathbb{N}_0} \left\lVert x\right\rVert _{\phi,\Phi_s, m, N}=0$. It means $\left\lVert x\right\rVert _{\phi,\Phi_s, m, N}=0$. By Lemma 3.6, it can be obtained that $\frac{\phi\left(2N+1\right)}{\left|S_{m,N}\right|}\sum_{k\in S_{m,N}}\Phi_s\left(a\left|x_k\right|\right)=0$. Therefore, it must be that $\Phi_s\left(a\left|x_k\right|\right) = 0$. Consequently, $x_k = 0$ for all $k \in S_{m,N}$, which implies $x = \left(x_k\right)_{k\in \mathbb{Z}} = 0$. 
            It is proven that $\left\lVert x \right\rVert _{\ell_{\phi,\Phi_s}}=0$ if and only if $x=0$..
        \end{enumerate}
        
        \item It will be shown that $\left\lVert ax \right\rVert _{\ell_{\phi,\Phi_s}}=\left|a\right|\cdot\left\lVert x \right\rVert _{\ell_{\phi,\Phi_s}}$ for every $a \in \mathbb{R}$.
        
        Case 1: $a = 0$.\\
        Observe that
        \begin{eqnarray*}
            \left\lVert ax \right\rVert _{\ell_{\phi,\Phi_s}}&=&\left\lVert 0\cdot x \right\rVert _{\ell_{\phi,\Phi_s}}\\
            &=&\left\lVert 0\cdot x \right\rVert _{\ell_{\phi,\Phi_s}}\\
            &=&0\\
            &=&\left|0\right|\cdot\left\lVert x \right\rVert _{\ell_{\phi,\Phi_s}}.
        \end{eqnarray*}
        case 2: $a \neq 0$.\\
        Take any $a \in \mathbb{R} \setminus \left\{0\right\}$. Note that
        \begin{eqnarray*}
            \left\lVert ax \right\rVert _{\phi, \Phi_s, m, N}&=&\inf \left\{b > 0 \;\middle|\; \frac{\phi\left(2N+1\right)}{\left|S_{m,N}\right|}\sum_{k\in S_{m,N}}\Phi_s\left(\frac{\left|a\cdot x_k\right|}{b}\right) \leq 1\right\}\\
            &=&\inf \left\{b > 0 \;\middle|\; \frac{\phi\left(2N+1\right)}{\left|S_{m,N}\right|}\sum_{k\in S_{m,N}}\Phi_s\left(\frac{\left|a\right|\cdot\left|x_k\right|}{b}\right) \leq 1\right\}\\
            &=&\inf \left\{b > 0 \;\middle|\; \frac{\phi\left(2N+1\right)}{\left|S_{m,N}\right|}\sum_{k\in S_{m,N}}\Phi_s\left(\frac{\left|x_k\right|}{\frac{b}{\left|a\right|}}\right) \leq 1\right\}\\
        \end{eqnarray*}
        Using the substitution $c = \frac{b}{\left|a\right|}$, it follows that
        \begin{eqnarray*}
            \left\lVert ax \right\rVert _{\phi, \Phi_s, m, N}&=&\inf \left\{c\left|a\right|> 0 \;\middle|\; \frac{\phi\left(2N+1\right)}{\left|S_{m,N}\right|}\sum_{k\in S_{m,N}}\Phi_s\left(\frac{\left|x_k\right|}{c}\right) \leq 1\right\}\\
            &=&\left|a\right|\inf \left\{c > 0 \;\middle|\; \frac{\phi\left(2N+1\right)}{\left|S_{m,N}\right|}\sum_{k\in S_{m,N}}\Phi_s\left(\frac{\left|x_k\right|}{c}\right) \leq 1\right\}\\
            &=&\left|a\right|\cdot\left\lVert x \right\rVert _{\phi, \Phi_s, m, N}.
        \end{eqnarray*}
        Thus,
        \begin{eqnarray*}
            \left\lVert ax \right\rVert _{\ell_{\phi,\Phi_s}}&=&\sup_{m \in \mathbb{Z}, N \in \mathbb{N}_0} \left\lVert a x\right\rVert_{\phi, \Phi_s, m, N}\\
            &=& \left|a\right|\cdot\sup_{m \in \mathbb{Z}, N \in \mathbb{N}_0} \left\lVert x\right\lVert_{\phi, \Phi_s, m, N}\\ 
            &=& \left| a\right| \left\lVert x\right\lVert_{\ell_{\phi, \Phi_s}}.
        \end{eqnarray*}
        Therefore, it is proven that $\left\lVert ax \right\rVert _{\ell_{\phi,\Phi_s}}=\left| a\right| \left\lVert x\right\lVert_{\ell_{\phi, \Phi_s}}$. for every $a\in \mathbb{R}$.
        
        \item It will be shown that there exists $C\geq 1$ such that
        \begin{eqnarray*}
            \left\lVert x+y \right\rVert _{\ell_{\phi,\Phi_s}}\leq C^{\frac{1}{s}} \left(\left\lVert x \right\rVert _{\ell_{\phi, \Phi_s}} + \left\lVert y \right\rVert _{\ell_{\phi, \Phi_s}} \right). \forall s\in \left(0,1\right].
        \end{eqnarray*}
        Case 1: $x=0$ and $y=0$.\\
        Since $x=0$ and $y=0$, it follows that $\left\lVert x \right\rVert _{\ell_{\phi,\Phi_s}}=0$ and $\left\lVert y \right\rVert _{\ell_{\phi,\Phi_s}}=0$.
        Therefore,
        \begin{eqnarray*}
            \left\lVert x+y \right\rVert _{\ell_{\phi,\Phi_s}} &=&\left\lVert 0 \right\rVert _{\ell_{\phi,\Phi_s}}\\
            &=& 0\\
            &=& C^{\frac{1}{s}}\left(0+0\right)\\
            &=& C^{\frac{1}{s}}\left(\left\lVert x \right\rVert _{\ell_{\phi,\Phi_s}}+\left\lVert y \right\rVert _{\ell_{\phi,\Phi_s}}\right). 
        \end{eqnarray*}
        Case 2: $x\neq0$ and $y=0$ or vise versa.\\  
        Without loss of generality, assume $x\neq0$ and $y=0$. Choose $C=1$, so 
        \begin{eqnarray*}
            \left\lVert x+y \right\rVert _{\ell_{\phi,\Phi_s}} &=&\left\lVert x+0 \right\rVert _{\ell_{\phi,\Phi_s}}\\
            &=& 1\left(\left\lVert x \right\rVert _{\ell_{\phi,\Phi_s}}+0\right)\\ &=&C^{\frac{1}{s}}\left(\left\lVert x \right\rVert _{\ell_{\phi,\Phi_s}}+\left\lVert y \right\rVert _{\ell_{\phi,\Phi_s}}\right).
        \end{eqnarray*}
        Case 3: $x\neq 0$, $y\neq 0$.\\
        Let $B = \left\{b > 0 \;\middle|\; \frac{\phi\left(2N+1\right)}{\left|S_{m,N}\right|}\sum_{k\in S_{m,N}} \Phi_s \left(\frac{\left|x_k + y_k\right|}{b}\right) \leq 1\right\}$, $X=\left\lVert x \right\rVert _{\phi, \Phi_s, m, N}$, and $Y=\left\lVert y\right\rVert _{\phi, \Phi_s, m, N}$.
        Define $C = \frac{X^s + Y^s}{(X + Y)^s}$. Observe that
        \begin{eqnarray*}
            \frac{\phi\left(2N+1\right)}{\left|S_{m,N}\right|}\sum_{k\in S_{m,N}}\Phi_s\left(\frac{\left|x_k+y_k\right|}{C^{\frac{1}{s}} \left(X+Y\right)}\right) &\leq  \frac{\phi\left(2N+1\right)}{\left|S_{m,N}\right|}\sum_{k\in S_{m,N}}\Phi_s\left(\frac{\left|x_k\right|+\left|y_k\right|}{C^{\frac{1}{s}} \left(X+Y\right)}\right)\\
            &=  \frac{\phi\left(2N+1\right)}{\left|S_{m,N}\right|}\sum_{k\in S_{m,N}}\Phi_s\left(\frac{\left|x_k\right|}{C^{\frac{1}{s}} \left(X+Y\right)}\frac{X}{X} \right.\\
            & \left.+\frac{\left|y_k\right|}{C^{\frac{1}{s}} \left(X+Y\right)}\frac{Y}{Y}\right). 
        \end{eqnarray*}
        Because of $\left(\frac{X}{C^{\frac{1}{s}} \left(X+Y\right)}\right)^s+\left(\frac{Y}{C^{\frac{1}{s}} \left(X+Y\right)}\right)^s=1$, by \emph{s}-convex definition,
        \begin{multline*}
            \frac{\phi(2N+1)}{|S_{m,N}|} \sum_{k\in S_{m,N}} 
            \Phi_s\!\left(
               \frac{|x_k|}{C^{1/s}(X+Y)}\frac{X}{X}
               + \frac{|y_k|}{C^{1/s}(X+Y)}\frac{Y}{Y}
            \right)
            \\
            \leq
            \left(\frac{X}{C^{1/s}(X+Y)}\right)^s 
               \frac{\phi(2N+1)}{|S_{m,N}|}\sum_{k\in S_{m,N}} \Phi_s\!\left(\frac{|x_k|}{X}\right)
            \\
            + \left(\frac{Y}{C^{1/s}(X+Y)}\right)^s 
               \frac{\phi(2N+1)}{|S_{m,N}|}\sum_{k\in S_{m,N}} \Phi_s\!\left(\frac{|y_k|}{Y}\right)
            \\
            \leq
            \left(\frac{X}{C^{1/s}(X+Y)}\right)^s
            + \left(\frac{Y}{C^{1/s}(X+Y)}\right)^s
            = 1.
        \end{multline*}

        It means $C^{\frac{1}{s}}\left(X+Y\right)=C^{\frac{1}{s}}\left(\left\lVert x \right\rVert _{\phi, \Phi_s, m, N}+\left\lVert y \right\rVert _{\phi, \Phi_s, m, N}\right)\in B$. Since $\left\lVert x+y \right\rVert _{\phi, \Phi_s, m, N}$ is infimum of $B$, then $\left\lVert x+y \right\rVert _{\phi, \Phi_s, m, N}\leq C^{\frac{1}{s}}\left(X+Y\right)=C^{\frac{1}{s}}\left(\left\lVert x \right\rVert _{\phi, \Phi_s, m, N}+\left\lVert y \right\rVert _{\phi, \Phi_s, m, N}\right)$.
        By taking the supremum over $m \in \mathbb{Z}$ and $N \in \mathbb{N}_0$, it can be obtained that $\left\lVert x+y \right\rVert _{\ell_{\phi,\Phi_s}}\leq C^{\frac{1}{s}} \left(\left\lVert x \right\rVert _{\ell_{\phi, \Phi_s}} + \left\lVert y \right\rVert _{\ell_{\phi, \Phi_s}} \right)$.
        Furthermore, it will be demonstrated that $1 \leq C < 2$. For every $s \in \left(0,1\right]$, observe that
        \begin{eqnarray*}
            \frac{X}{X + Y} \leq \left(\frac{X}{X + Y}\right)^s < 1
        \end{eqnarray*}
        and
        \begin{eqnarray*}
            \frac{Y}{X + Y} \leq \left(\frac{Y}{X + Y}\right)^s < 1.
        \end{eqnarray*}
        As a result,
        \begin{eqnarray*}
            1 = \frac{X + Y}{X + Y} \leq \frac{X^s + Y^s}{(X + Y)^s} = C < 1 + 1 = 2.
        \end{eqnarray*}
        Thus, it has been established that \(1 \leq C < 2\), ensuring that $\left\lVert x+y \right\rVert _{\ell_{\phi,\Phi_s}}\leq C^{\frac{1}{s}} \left(\left\lVert x \right\rVert _{\ell_{\phi, \Phi_s}} + \left\lVert y \right\rVert _{\ell_{\phi, \Phi_s}} \right). \forall s\in \left(0,1\right]$.  
    \end{enumerate} 
    Since conditions (1) through (4) are satisfied, it is established that $\left\lVert x \right\rVert _{\ell_{\phi, \Phi_s}}$ is a quasi-norm on the Orlicz-Morrey-s sequence space $\ell_{\phi, \Phi_s}$.
\end{proof}

After that, the completeness of the Orlicz-Morrey-s sequence space $\ell_{\phi, \Phi_s}$ will be demonstrated. Furthermore, the space $\left(\ell_{\phi, \Phi_s}, \left\lVert \cdot \right\rVert_{\ell_{\phi, \Phi_s}}\right)$ is shown to be a quasi-Banach space. This result is presented in the following theorem.

\begin{theorem}
    The spaces $\left(\ell_{\phi, \Phi_s}, \left\lVert \cdot \right\rVert_{\ell_{\phi, \Phi_s}}\right)$ is a quasi-Banach space.
\end{theorem}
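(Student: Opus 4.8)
The plan is to recall that a quasi-Banach space is exactly a quasi-normed space that is complete. Since Theorem 3.7 has already shown that $\left\lVert\cdot\right\rVert_{\ell_{\phi,\Phi_s}}$ is a quasi-norm, it remains only to prove completeness: every Cauchy sequence in $\ell_{\phi,\Phi_s}$ converges in quasi-norm to an element of $\ell_{\phi,\Phi_s}$. I would therefore start with an arbitrary Cauchy sequence $\left(x^{(n)}\right)_{n\in\mathbb{N}}$ with $x^{(n)}=\left(x^{(n)}_k\right)_{k\in\mathbb{Z}}$ and carry out the standard three-step construction: extract a coordinatewise limit, verify it lies in the space, and show the convergence holds in the quasi-norm.

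First I would establish a coordinate estimate so that Cauchyness in quasi-norm forces each coordinate sequence to be Cauchy in $\mathbb{R}$. For a fixed index $k_0$, taking $m=k_0$ and $N=0$ gives $S_{m,N}=\{k_0\}$ and reduces the local quantity to $\left\lVert y\right\rVert_{\phi,\Phi_s,k_0,0}=\inf\{b>0 : \phi(1)\Phi_s(|y_{k_0}|/b)\leq 1\}$. Applying Lemma 3.2 at this local level, using that $\left\lVert y\right\rVert_{\phi,\Phi_s,k_0,0}\leq\left\lVert y\right\rVert_{\ell_{\phi,\Phi_s}}$ together with the monotonicity of $\Phi_s$ from Lemma 2.6(c), and then inverting via Lemma 2.8, I would deduce a pointwise bound of the form $|y_{k_0}|\leq \Phi_s^{-1}\!\left(1/\phi(1)\right)\left\lVert y\right\rVert_{\ell_{\phi,\Phi_s}}$ with $\Phi_s^{-1}(1/\phi(1))$ a finite constant. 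Applying this to $y=x^{(n)}-x^{(l)}$ shows each $\left(x^{(n)}_k\right)_n$ is Cauchy in $\mathbb{R}$; by completeness of $\mathbb{R}$, I define $x_k=\lim_n x^{(n)}_k$ and $x=(x_k)_{k\in\mathbb{Z}}$.

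Next I would prove simultaneously that $x\in\ell_{\phi,\Phi_s}$ and that $x^{(n)}\to x$. Fix $\varepsilon>0$ and choose $N_0$ with $\left\lVert x^{(n)}-x^{(l)}\right\rVert_{\ell_{\phi,\Phi_s}}\leq\varepsilon$ for all $n,l\geq N_0$. For each fixed pair $(m,N)$ and each $n,l\geq N_0$, the local quantity obeys $\left\lVert x^{(n)}-x^{(l)}\right\rVert_{\phi,\Phi_s,m,N}\leq\varepsilon$, and by Lemma 3.2 together with the monotonicity of $\Phi_s$ this produces the finite modular inequality
\[
\frac{\phi(2N+1)}{|S_{m,N}|}\sum_{k\in S_{m,N}}\Phi_s\!\left(\frac{|x^{(n)}_k-x^{(l)}_k|}{\varepsilon}\right)\leq 1 .
\]
Because $S_{m,N}$ is finite and $\Phi_s$ is continuous, letting $l\to\infty$ and using $x^{(l)}_k\to x_k$ passes the limit through the finite sum and yields the same inequality with $x^{(l)}_k$ replaced by $x_k$. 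Hence $\varepsilon$ belongs to the admissible set defining $\left\lVert x^{(n)}-x\right\rVert_{\phi,\Phi_s,m,N}$, so $\left\lVert x^{(n)}-x\right\rVert_{\phi,\Phi_s,m,N}\leq\varepsilon$ uniformly in $(m,N)$. Taking the supremum over $m\in\mathbb{Z}$ and $N\in\mathbb{N}_0$ gives $\left\lVert x^{(n)}-x\right\rVert_{\ell_{\phi,\Phi_s}}\leq\varepsilon$ for all $n\geq N_0$; this shows $x^{(n)}-x\in\ell_{\phi,\Phi_s}$, whence $x=x^{(N_0)}-(x^{(N_0)}-x)\in\ell_{\phi,\Phi_s}$ by the vector-space structure, and also that $x^{(n)}\to x$ in quasi-norm.

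The main obstacle I anticipate is the interchange of the limit $l\to\infty$ with the infimum defining the local norm. I would deliberately avoid passing the limit through an infimum and instead pass it through the finite modular sum, which is legitimate by continuity of $\Phi_s$ and finiteness of $S_{m,N}$, recovering the norm bound only afterward from membership in the admissible set. A secondary point requiring care is that every estimate must be tracked uniformly in $(m,N)$ so that the final supremum survives; this succeeds precisely because $\varepsilon$ originates from the global quasi-norm and is independent of $(m,N)$. Notably, the weaker quasi-triangle inequality causes no difficulty here, since completeness is obtained without ever summing infinitely many quasi-norm terms.
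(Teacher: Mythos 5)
Your proposal is correct, and its global skeleton coincides with the paper's: reduce to $N=0$ to get a pointwise bound $\left|y_{k_0}\right|\leq\Phi_s^{-1}\left(1/\phi(1)\right)\left\lVert y\right\rVert_{\ell_{\phi,\Phi_s}}$, conclude each coordinate sequence is Cauchy in $\mathbb{R}$, pass to the coordinatewise limit, and then show membership in $\ell_{\phi,\Phi_s}$ together with convergence in quasi-norm. The genuine difference lies at the one step where the paper is weakest: after constructing the candidate limit $x'$, the paper simply asserts that $\left\lVert x_n-x_m\right\rVert_{\ell_{\phi,\Phi_s}}<\varepsilon$ and $\lim_{m\to\infty}x_j^{(m)}=x_j'$ ``imply'' $\left\lVert x_n-x'\right\rVert_{\ell_{\phi,\Phi_s}}<\varepsilon$, with no argument for interchanging the limit with the infimum and supremum defining the quasi-norm. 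Your proof supplies exactly the missing argument: you pass the limit $l\to\infty$ through the modular sum (legitimate because $S_{m,N}$ is finite and $\Phi_s$ is continuous), conclude that $\varepsilon$ lies in the admissible set for $x^{(n)}-x$ at every fixed $(m,N)$, and only then take the supremum. Two further points in your favor: you invoke Lemma 3.2 (plus monotonicity of $\Phi_s$) to get the modular inequality from the norm bound, whereas the paper cites Lemma 3.4, whose hypothesis (a bound of $1/b^s$ for all $b\geq 1$) does not match the way it is used; and you obtain $x\in\ell_{\phi,\Phi_s}$ cleanly from $x=x^{(N_0)}-\left(x^{(N_0)}-x\right)$ and the quasi-triangle inequality, avoiding the paper's ill-formed estimate $\left\lVert x'\right\rVert_{\ell_{\phi,\Phi_s}}<C^{1/s}\left(\infty+\infty\right)<\infty$. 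One small point to tighten: when you derive the modular inequality from $\left\lVert x^{(n)}-x^{(l)}\right\rVert_{\phi,\Phi_s,m,N}\leq\varepsilon$ via Lemma 3.2, treat the degenerate case of a vanishing local norm separately (there the modular inequality at $\varepsilon$ follows directly from the definition of the infimum and monotonicity), since Lemma 3.2 assumes the local norm is nonzero.
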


\begin{proof}
    By Theorem 3.7, it has been established that $\left\lVert \cdot \right\rVert_{\ell_{\phi, \Phi_s}}$ is a quasi-norm. Therefore, it suffices to show that $\ell_{\phi, \Phi_s}$ is complete by proving that any Cauchy sequence in $\ell_{\phi, \Phi_s}$ converges to an element in $\ell_{\phi, \Phi_s}$.
    
    Consider an arbitrary Cauchy sequence $\left(x_k\right)_{k \in \mathbb{Z}} \subset \ell_{\phi, \Phi_s}$ where $x_k = \left(x_j^{(k)}\right)_{j \in \mathbb{Z}} = \left(\ldots, x_{-1}^{(k)}, x_0^{(k)}, x_1^{(k)}, \ldots\right)$ and $x_j^{(k)} \in \ell_{\phi, \Phi_s}$. Consequently, for any $\varepsilon > 0$, there exists $K_\varepsilon \in \mathbb{N}$ such that for all $m, n \geq K_\varepsilon$, the inequality
    \begin{eqnarray*}
        \left\lVert x_n - x_m \right\rVert_{\ell_{\phi, \Phi_s}} < \varepsilon
    \end{eqnarray*}
    holds. Based on the definition of $\left\lVert \cdot \right\rVert_{\ell_{\phi, \Phi_s}}$, it follows that
    \begin{eqnarray*}
        \left\lVert x_n - x_m \right\rVert_{\phi, \Phi_s, m, N} \leq \left\lVert x_n - x_m \right\rVert_{\ell_{\phi, \Phi_s}} < \varepsilon.
    \end{eqnarray*}
    Using Lemma 3.4, it is obtained that
    \begin{eqnarray*}
        \frac{\phi\left(2N+1\right)}{\left|S_{m, N}\right|} \sum_{j \in S_{m, N}} \Phi_s\left(\frac{\left|x_j^{(n)} - x_j^{(m)}\right|}{\varepsilon}\right) \leq 1.
    \end{eqnarray*}
    Choosing $N = 0$, it follows that 
    \begin{eqnarray*}
        \phi(1) \sum_{j \in S_{m, N}} \Phi_s\left(\frac{\left|x_j^{(n)} - x_j^{(m)}\right|}{\varepsilon}\right) \leq 1.
    \end{eqnarray*}
    As a result, 
    \begin{eqnarray*}
        \Phi_s\left(\frac{\left|x_j^{(n)} - x_j^{(m)}\right|}{\varepsilon}\right) \leq \frac{1}{\phi(1)}.
    \end{eqnarray*}
    By Lemmas 2.8(c) and 2.6(c), it is obtained that
    \begin{eqnarray*}
        \frac{\left|x_j^{(n)} - x_j^{(m)}\right|}{\varepsilon} \leq \Phi_s^{-1}\left(\Phi_s\left(\frac{\left|x_j^{(n)} - x_j^{(m)}\right|}{\varepsilon}\right)\right) \leq \Phi_s^{-1}\left(\frac{1}{\phi(1)}\right).
    \end{eqnarray*}
    This implies $\left|x_j^{(n)} - x_j^{(m)}\right| < \varepsilon \Phi_s^{-1}\left(\frac{1}{\phi(1)}\right)$, so for a fixed $j$, the sequence $x_k^* = \left(x_j^{(k)}\right)_{k \in \mathbb{Z}} = \left(\ldots, x_j^{(-1)}, x_j^{(0)}, x_j^{(1)}, \ldots\right)$ forms a Cauchy sequence in $\mathbb{R}$. By the Cauchy convergence criterion, $x_k^* = \left(x_j^{(k)}\right)_{k \in \mathbb{Z}}$ converges, and there exists $x_j' \in \mathbb{R}$ such that
    \begin{eqnarray*}
        \lim_{k \to \infty} x_k^* = \lim_{k \to \infty} x_j^{(k)} = x_j'.
    \end{eqnarray*}
    Let $x' = \left(x_j'\right) = \left(\ldots, x_{-1}', x_0', x_1', \ldots\right)$. It will be demonstrated that $x' \in \ell_{\phi, \Phi_s}$ and $\lim x_k = \lim_{j \to \infty} x_j^{(k)} = x_j'$. Since $\left\lVert x_n - x_m \right\rVert_{\ell_{\phi, \Phi_s}} < \varepsilon$ and $\lim_{m \to \infty} x_j^{(m)} = x_j'$, it follows that
    \begin{eqnarray*}
        \left\lVert x_n - x' \right\rVert_{\ell_{\phi, \Phi_s}} < \varepsilon < \infty.
    \end{eqnarray*}
    This implies $x_n - x' \in \ell_{\phi, \Phi_s}$. From the quasi-norm definition, there exists $1 \leq C < 2$ such that for every $s \in \left(0, 1\right]$, the following holds:
    \begin{eqnarray*}
        \left\lVert x' \right\rVert_{\ell_{\phi, \Phi_s}} &=& \left\lVert x_n + x' - x_n \right\rVert_{\ell_{\phi, \Phi_s}}\\
        &\leq& C^{1/s} \left(\left\lVert x_n \right\rVert_{\ell_{\phi, \Phi_s}} + \left\lVert x' - x_n \right\rVert_{\ell_{\phi, \Phi_s}}\right)\\
        &<& C^{1/s} \left(\infty + \infty\right)\\
        &<& \infty.
    \end{eqnarray*}
    This implies $x' \in \ell_{\phi, \Phi_s}$. Furthermore, since $\left\lVert x_n - x' \right\rVert_{\ell_{\phi, \Phi_s}} < \varepsilon$, by the definition of convergence in normed spaces, it follows that $\lim x_k = \lim_{j \to \infty} x_j^{(k)} = x'$. Since $\left(x_k\right)_{k \in \mathbb{Z}} \subset \ell_{\phi, \Phi_s}$ is a Cauchy sequence, $x' \in \ell_{\phi, \Phi_s}$, and $\lim x_k = x'$, it can be concluded that $\ell_{\phi, \Phi_s}$ is a complete space. Therefore, $\left(\ell_{\phi, \Phi_s}, \left\lVert \cdot \right\rVert_{\ell_{\phi, \Phi_s}}\right)$ is a quasi-Banach space.
\end{proof}

\section{Conclusion}
The research findings regarding the discrete Orlicz-Morrey-\emph{s} spaces $\ell_{\phi. \Phi_s}$ and the properties of sequences in these spaces suggest that the discrete Orlicz-Morrey-\emph{s} spaces $\ell_{\phi, \Phi_s}$ generalize the discrete Orlicz-Morrey spaces $\ell_{\phi, \Phi}$. The discrete Orlicz-Morrey-\emph{s} spaces $\ell_{\phi, \Phi_s}$ are equipped with the $\left\lVert \cdot \right\rVert _{\ell_{\phi, \Phi_s}}$ function, which defines a quasi-norm on $\ell_{\phi, \Phi_s}$. Furthermore, the lemmas or properties of the disrete Orlicz-Morrey spaces hold for the discrete Orlicz-Morrey-\emph{s} spaces under certain conditions. Finally, it is concluded that the discrete Orlicz-Morrey-\emph{s} spaces is complete, and consequently, the space $\left(\ell_{\phi, \Phi_s}, \left\lVert \cdot \right\rVert_{\ell_{\phi, \Phi_s}}\right)$ is a Quasi-Banach space.

\section*{Acknowledgments}
This research is supported by Penelitian Pengembangan Kelompok Bidang Keilmuan and Penguatan Kompetensi Universitas Pendidikan Indonesia (UPI) 2024.

\bibliographystyle{plain}
\bibliography{references}

\end{document}